\newcommand{\op}[1]{\ensuremath{\operatorname{#1}}}
\newcommand{\ie}{\textit{i.e.}}
\newcommand\at[1]{
{#1}{\raisebox{-0.5\height}{\(\left. \vphantom{#1} \right |\)}\vphantom{#1}}
}
\newcommand{\intcc}[2]{\ensuremath{\left[#1, #2\right]}}
\newcommand{\intoo}[2]{\ensuremath{\left]#1, #2\right[}}
\newcommand{\tridots}{\mathop{:\!\cdot}}
\newcommand{\eg}{\textit{e.g.}}
\newcommand{\Af}{\mathbb{C}}              
\newcommand{\Bf}{{\mathbb{D}}}   
\newcommand{\Cf}{{\mathbb{E}}}    
\newcommand{\Df}{\mathbb{F}}              
\newcommand{\Ef}{{\mathbb{G}}}   
\newcommand{\transverse}{\mathcal{F}^\mathbb{B}}
\newcommand{\ov}[1]{\overline #1}
\newcommand{\Id}{\mathrm{Id}}
\newtheorem{theo}[equation]{Theorem}
\normalfont\fontsize{16}{20}\bfseries}{\thesection}{1em}{}
\normalfont\fontsize{16}{20}\bfseries}{\thesubsection}{1em}{}
\begin{document}
\pagestyle{empty}

\newcommand{\preprintfooter}{\vspace{-2.5em}\ \\
  \begin{tcolorbox}[colback=red!5!white, colframe=red!50!black, boxrule=0.5pt, arc=2mm, left=2mm, right=2mm, width=.9\linewidth]
    \centering\small\itshape
    \textcolor{red!70!black}{The present manuscript is a preprint pending peer review; please read with due scientific caution.}
  \end{tcolorbox}
}

\pagestyle{fancy}
\pagenumbering{arabic}
\lfoot{\preprintfooter}
\rfoot{\thepage}

\begin{center}
\begin{spacing}{2.05}
{\fontsize{20}{20}
\bf
A New Framework for Unidimensional Structures Based on Generalised Continua
}
\end{spacing}
\end{center}
\vspace{-1.25cm}
\begin{center}
{\fontsize{14}{20}
\bf
M. Crespo\textsuperscript{a,c}, G. Casale\textsuperscript{a,d}, L. Le Marrec\textsuperscript{a,e} and P. Neff\textsuperscript{b}\\
\bigskip
}
{\fontsize{12}{20}
a. Institut de Math\'ematiques de Rennes, University of Rennes, Rennes 263 avenue du G\'en\'eral Leclerc\\
b. Head of Chair for Nonlinear Analysis and Modelling, Faculty of Mathematics,\\
University of Duisburg-Essen, Thea-Leymann-Straße 9, Essen, Germany, patrizo.neff@uni-due.de\\
c. mewen.crespo@univ-rennes.fr\quad
d. guy.casale@univ-rennes.fr\quad
e. loic.lemarrec@univ-rennes.fr
}
\end{center}

\vspace{10pt}

{\fontsize{16}{20}
\bf
Abstract
}
\bigskip

\textit{
The present work introduces a family of beam models derived from a three-dimensional higher-order elasticity framework. By incorporating three kinematic fields -- the macroscopic displacement \(u\), the micro-distortion tensor \(P\), and the third-order tensor \(N\) -- the study systematically explores three regimes: holonomic, semi-holonomic, and non-holonomic. These regimes correspond to varying levels of kinematic constraints, ranging from classical elasticity to a fully relaxed model. The holonomic case reduces to a higher-order Euler--Bernoulli beam model, while the semi-holonomic case generalises the Timoshenko beam model. The non-holonomic case provides a unified framework that naturally incorporates both dislocations and disclinations. Furthermore, the holonomic and semi-holonomic models are shown to emerge as singular limits of the non-holonomic model by increasing specific penalty coefficients. Simplified ordinary differential equation systems are derived for specific cases, such as pure traction and bending, illustrating the practical applicability of the models. The results highlight the hierarchical structure of the proposed framework and its ability to capture material defects in beam-like structures.
}

\vspace{28pt}

{\fontsize{14}{20}
\bf
Keywords : beams, generalised continua, dislocation, disclinations, defects
}
\bigskip

\section{Introduction}

\subsection{Historical perspective}

\renewcommand{\th}{\textsuperscript{th}}

The scientific interest for beams is centuries old, vastly predating modern continuum mechanics theories. Some place its start as early as the 15\th century\,\autocite{BallariniVincieulerbernoulliBeamTheory2003} with the work of Leonardo da Vinci (1452--1519). Although lacking the mathematical tools to formalise his insights, his sketches and notes in his \textit{Codex Madrid I \& II}\,\autocite{daVinciCodexMadridI} indicate a qualitative understanding of structural behaviours. For others, it starts in the 17\th century, with the work of Galileo Galilei (1564--1642) who made one of the first attempt to quantify the strength of beams, and in particular their failure under their own weight~\autocite{galilei1638discorsi}. However, his assumptions were flawed, particularly regarding the distribution of stress across a beam's cross-section. It is another century later, that Jacob Bernoulli (1655--1705) corrected Galileo's work~\autocite{BernoulliVeritableHypotheseResistance1705}, introducing the concept of curvature in the study of elastic bodies.\\

If Jacob Bernoulli laid the groundwork for understanding how beams bend, a comprehensive theory was not yet fully developed. Such a theory was only developed in the middle of the 18\th century by Daniel Bernoulli\footnote{Daniel Bernoulli was the nephew of Jacob Bernoulli.} (1700--1782) and Leonhard Euler (1707--1783)~\autocite{euler1744methodus,bernoulli1738hydrodynamica}. They introduced the principle that internal stresses in a beam are related to its curvature, paving the way for what is now known as the Euler--Bernoulli beam model. Importantly, the model relies on the following assumptions:
\begin{enumerate}
    \item the beam is slender, with its longitudinal length \(L\) significantly greater than its transversal dimensions;
    \item transversal sections remain plane and perpendicular to the neutral axis after deformation and
    \item only transversal loads are considered, meaning that the effects of shear deformation and rotatory inertia are ignored.
\end{enumerate}
The first hypothesis is common across all beam models and is sometimes used as a definition of a beam. Under the Euler--Bernoulli framework, the relationship between the bending moment \(M(X)\) and the beam's Euclidean curvature \(\frac{\mathrm{d}^2\, w}{\mathrm{d}X^2}\) is given by:
\begin{align}
    M\left( X \right) &= -E\,I\,\frac{\mathrm{d}^2\, w}{\mathrm{d}X^2},
\end{align}
where \(E\) is the Young's modulus -- Thomas Young (1773--1829) --, \(I\) is the second moment of area of the beam's cross-section and \(w(X)\) is the transverse displacement of the beam at position \(X \in \intcc 0L\). Combining this result with (the angular momentum version of) the second law of Isaac Newton (1642 -- 1727), one gets:
\begin{align}
    f(X)
        &= \frac{\mathrm{d}^2}{\mathrm{d} X^2}\left( E\,I\,\frac{\mathrm{d}^2\, w}{\mathrm{d}X^2} \right),
\end{align}
where \(f(X)\) is the distributed load per unit length acting on the beam. Although this model was first enunciated circa 1750, it was not applied on a large scale until the late 19\th century, notably with the development of Gustave Eiffel's (1832--1923) Tower in 1887. It then became a cornerstone of the second industrial revolution.\\

In the meantime, Joseph-Louis de Lagrange\footnote{Born under the italian name Giuseppe Luigi Lagrangia from French parents, he later took the French nationality in 1802} (1736--1813) published his pioneer work \textit{Mécanique Analytique}~\autocite{LagrangeMechaniqueAnalytiqueParis1787}. More than thirty years later, building upon Lagrange's analytical mechanics, Augustin-Louis Cauchy (1789--1857) published his work \textit{Sur l'équilibre et le mouvement intérieur des corps considérés comme des masses continues}, considered by many as the starting point of modern continuum mechanics. The latter is fundamentally tridimensional, while Euler--Bernoulli beam model is a unidimensional model.\\

A natural wish may therefore be to obtain the Euler--Bernouilli beam model as a special limit case of continuum mechanics and, based on this understanding, try to extend it. This quest proved to be rather tedious, with progress still being made nowadays. This is the focus of this paper.\\

In 1856, Adhémar Barré de Saint-Venant (1797--1886) published his "Mémoire sur la torsion des prismes"~\autocite{deSaintMemoireTorsionPrismes1856}. In this work, he laid one of the first stones by addressing the pure torsion (no bending nor compression) of homogeneous bars and derived a governing equation based on linear elasticity and continuity of stress. His formulation allowed for non-uniform distributions of shear stress in the cross-section of prismatic bars. More importantly, Saint-Venant introduced the principle -- now bearing his name -- stating that localised effects due to applied loads vanish at distances far from the point of application~\autocite{deSaint-VenantMemoireDiversGenres1865}. This principle provided a rigorous justification for employing simplified models in structural analysis and is used in most modern models.\\

In 1876, Leo Pochhammer (1841--1920) published a seminal paper where he derived the equations governing the propagation of longitudinal waves in an infinite, isotropic, elastic cylinder~\autocite{UeberFortpflanzungsgeschwindigkeitenKleiner1876}. This work was then enriched in 1889 by Charles Chree (1860--1928)~\autocite{ChreeEquationsIsotropicElastic1889}. The Pochhammer--Chree equations describe how stress waves propagate in cylindrical rods and were one of the first beam models using tridimensional continuum mechanics. However, this model only considers tension/compression and does not contain Euler--Bernoulli beam models, nor does it contain a practical generalisation.\\

In the early 20\th century, Stephen Prokofievich Timoshenko (1878--1972) successfully used tridimensional continuum mechanics to obtain a unidimensional beam model~\autocite{TimoshenkoCorrectionShearDifferential1921}. His model is a direct generalisation of Euler--Bernoulli beam model, where a new variable \(\theta\) is introduced. In static equilibrium, the governing equations are:
\begin{align}
    \label{eq_timoshenko_intro}
    f(X)
        &= \frac{\mathrm{d}^2}{\mathrm{d} X^2}\left( \kappa G A \left ( \frac{\mathrm{d} w}{\mathrm{d}X} - \theta \right ) \right),\\\nonumber
    0
        &= \frac{\mathrm{d} w}{\mathrm{d}X} - \theta + \frac{1}{\kappa G A} \frac{\mathrm{d}}{\mathrm{d}X}\left( EI\frac{\mathrm{d}\theta}{\mathrm{d} X} \right),
\end{align}
where \(A\) is the transversal cross-section area, \(G\) the shear modulus and \(\kappa\) is a shear coefficient obtained from the shape of the cross-section (\(\kappa=\frac 56\) for a rectangular section). One obtains Euler-Bernoulli beam theory by substituting \cref{eq_timoshenko_intro}.2 in to \cref{eq_timoshenko_intro}.1 and taking the limit \(G \to \infty\). This corresponds to not allowing any shearing of the transversal planes. That is, imposing \(\theta = \frac{\mathrm{d} w}{\mathrm{d}X}\). The model of \cref{eq_timoshenko_intro} is known as the Timoshenko or Timoshenko--Ehrenfest beam model -- from the name of Paul Ehrenfest (1880--1933), latter shown to have been an undeclared co-author~\autocite{ElishakoffWhoDevelopedSocalled2020}.\\

Parallelly, in his pioneer work of 1907~\autocite{VolterraLequilibreCorpsElastiques1907}, Vito Volterra (1860--1940) studied the limitations of the theory of continuum mechanics. By carefully cutting and gluing back rubber cylinders, he showed the existence of translational and rotational discontinuity. Such defects, called dislocations and disclinations\footnote{This is the modern nomenclature. In \autocite{VolterraLequilibreCorpsElastiques1907} Volterra originally uses the term "\textit{distorsion}", which was a tensor with six components, encapsulating both defects. In some text, disclinations are also called \text{disinclinations}~\autocite{FrankLiquidCrystalsTheory1958}.}, arise from the lack of regularity of the actual atomistic placement, which violates Schwarz equality. Importantly, he postulated that all defects of a material -- defined as a discrepancy between the material's true placement and the smooth elastic approximation -- can be obtained from combinations of dislocations and disclinations.\\

This early work proved the need for new model, incorporating the microscopic structure in their kinematics. Such a model was developed by the Cosserat brothers -- François Nicolas Cosserat (1852--1914) and Eugène-Maurice-Pierre Cosserat (1866--1931) -- in 1909, in their book \textit{Théorie des Corps Déformables}~\autocite{CosseratCosseratTheorieCorpsDeformables1909}. In particular, they propose a model in which a smooth macroscopic n-dimensional field from Cauchy's continuum mechanics is enriched with a field of orthonormal frames, prescribing the orientation of the microstructure at each macroscopic point.\\

In particular, the \(n=1\) case, coined \textit{ligne déformable} in \autocite{CosseratCosseratTheorieCorpsDeformables1909}, is a beam model, often referred to as the Cosserat beam. Crucially, by restricting these orthonormal frames to be planar rotations along the osculating plane -- \ie{} no screw twisting --, the kinematic of the Cosserat beam becomes equivalent to Timoshenko beam's kinematic. Furtheremore, similarly to the Timoshenko case, when the orthonormal frames correspond to the gradient of the macroscopic film, one retrieves the kinematic of Euler-Bernoulli beam theory.\\

The Cosserat brothers' model was latter geometrically formalised by \'Elie Cartan (1869--1951)~\autocite{CartanGeneralisationNotionCourbure1922} and further developed, in the remaining of the 20\th century, by Raymond Mindlin (1906--1987)~\autocite{MindlinMicrostructureLinearElasticity1964}, Ahmed Cemal Eringen (1921--2009)~\autocite{EringenMechanicsMicromorphicContinua1969,EringenMicrocontinuumFieldTheories1999,EringenTheoryMicropolarElasticity1968}, Richard Toupin (1926--2017)~\autocite{ToupinTheoriesElasticityCouplestress1964,ToupinElasticMaterialsCouplestresses1962} and many others, with the first existence theorem given by Patrizio Neff (1969--today)~\autocite{NeffExistenceMinimizersFinitestrain2006}. This lead to a theory commonly referred to as the theory of micromorphic media, where the microscopic placement does not have to be orthogonal anymore, but is rather given by an unconstrained matrix field~\(\Xi\).\\

\subsection{Approach in this work}

The theory of micromorphic media allows for dislocations to naturally arise, encapsulated in the non-triviality of \(\op{curl}\Xi\). However, its kinematic prohibits the apparition of disclinations. Indeed, the latter corresponds to a violation of Schwarz equality for the second order, however one has: \(\op{Curl}\nabla \Xi = 0\). One therefore needs to relax the second order, similarly to how the first order was relaxed when the microstructure was introduced. This work has been done in a previous article by the first authors of the present paper~\autocite{CrespoEtAlTwoScaleGeometricModelling2024}. Importantly, this model allows for disclinations to arise from the kinematics alone.\\

The model is built upon the following kinematic variables:
\begin{enumerate}
    \item \(u : \mathbb{B} \longrightarrow \mathbb{R}^3\) --- a \(3\)-dimensional displacement field defined on \(\mathbb{B}\),
    \item \(P : \mathbb{B} \longrightarrow \mathbb{R}^{3 \times 3}\) --- a matrix field representing the microstructural displacement,
    \item \(N : \mathbb{B} \longrightarrow \mathbb{R}^{3 \times 3 \times 3}\) --- a third-order tensor field capturing first-order microstructural changes,
\end{enumerate}

where \(\mathbb{B}\) is the material manifold (or body), \(\left\| u \right\| \ll 1\), \(\left\| P \right\| \ll 1\). The field \(\varphi(X) = X + u(X)\) corresponds to the smooth macroscopic deformation of classical continuum mechanics and the field \(\Id + P\) corresponds to the microscopic field \(\Xi\) introduced by the micromorphic theory. \(N\) is the new field added in \autocite{CrespoEtAlTwoScaleGeometricModelling2024} \(\left (\text{named \(\mathbf{F}_{\mathrm{h}}^{\mathrm{v}}\) there}\right )\). The associated constitutive coefficients are:
\begin{multicols}{2}
\begin{enumerate}
    \item \(\Af : \mathbb{B} \longrightarrow \mathbb{R}^{3\times 3 \times 3 \times 3}\),
    \item \(\Bf : \mathbb{B} \longrightarrow \mathbb{R}^{3\times 3 \times 3 \times 3\times 3 \times 3}\),
    \item \(\Cf : \mathbb{B} \longrightarrow \mathbb{R}^{3\times 3 \times 3 \times 3 \times 3\times 3 \times 3 \times 3}\),
    \item \(\Df : \mathbb{B} \longrightarrow \mathbb{R}^{3\times 3 \times 3 \times 3}\),
    \item \(\Ef : \mathbb{B} \longrightarrow \mathbb{R}^{3\times 3 \times 3 \times 3 \times 3 \times 3}\),
\end{enumerate}
\end{multicols}

and the associated force fields are:
\begin{multicols}{2}
\begin{enumerate}
    \item \(f_0 : \mathbb{B} \longrightarrow \mathbb{R}^3\),
    \item \(f_1 : \mathbb{B} \longrightarrow \mathbb{R}^{3 \times 3}\),
    \item \(f_2 : \mathbb{B} \longrightarrow \mathbb{R}^{3 \times 3 \times 3}\),
    \item \(T_0 : \partial\mathbb{B} \longrightarrow \mathbb{R}^3\),
    \item \(T_1 : \partial\mathbb{B} \longrightarrow \mathbb{R}^{3 \times 3}\),
    \item \(T_2 : \partial\mathbb{B} \longrightarrow \mathbb{R}^{3 \times 3 \times 3}\).
\end{enumerate}
\end{multicols}

The total energy of the system is then given by:
\begin{align}
    &\int_\mathbb{B} \op{sym}{P} : \Af : \op{sym}{P} + N \tridots \Bf \tridots N
    + \nabla N :: \Cf :: \nabla N\\\nonumber
    &\quad\quad + \left( \nabla u - \at{P}_{1\ldots n} \right) : \Df : \left( \nabla u - \at{P}_{1\ldots n} \right)
    + \left( \nabla P - \at{N}_{1 \ldots n} \right) \tridots \Ef \tridots \left( \nabla P - \at{N}_{1 \ldots n} \right)\\\nonumber
    &\quad\quad - f_0 \cdot u - f_1 : P - f_2 \tridots N\, \mathrm{d}X
    - \int_{\partial \mathbb{B}} T_0 \cdot u + T_1 : P + T_2 \tridots N\, \mathrm{d} X.
\end{align}

where \(2\left[ \op{sym}P \right]^i_j := P^i_j - P^j_i\) is the linearisation of \(\left( \Id + P \right)^\mathrm{T}\cdot\left( \Id + P \right)\) and \(2\left[ \op{Skew}N \right]^i_{jk} = N^i_{jk} - N^i_{kj}\). The dependency of the energy on \(\op{sym}P\), \(\nabla P\) \(N\), \(P - \nabla u\) and their gradients is required by objectivity, as derived in \autocite{CrespoEtAlTwoScaleGeometricModelling2024}. We make the choice to neglect most mixed terms as well as higher order terms in \(u\) and \(P\). Furthermore, in order that explicit analytical solutions may be found, we shall also simplify the constitutive tensors as follows:
\begin{align}
    \Af^{il}_{jm} &= a\,\delta^{il}\delta_{jm}, &
    \Bf^{ijlm}_{kn} &= b\,\delta^{il}\delta^{jm}\delta_{kn},&
    \Cf^{ij1lm1}_{kn} &= c\,\delta^{il}\delta^{jm}\delta_{kn},&\\\nonumber
    \Df^{11}_{ij} &= d\,\delta_{ij},&
    \Ef^{i1l1}_{jm} &= e\,\delta^{il}\delta_{jm}.
\end{align}
so that every contraction of a constitutive tensor becomes a norm (\eg{} \(\op{sym}P : \Af : \op{sym}P = a \left\| \op{sym}P \right\|^2\)). Technically, the Kronecker symbols should be replaced with the macroscopic, microscopic and mixed metrics of \cite{CrespoEtAlTwoScaleGeometricModelling2024}. However, in the coordinate system we use, those all reduce to identity matrices. For readability, Kroneckers will be omitted when their only effect is to change the variance of an index. Under those constitutive assumptions, the internal energy simplifies to:
\begin{align}
	&\int_\mathbb{B}
			\frac a4 \left( P^i_j + P^j_i \right)\left( P^i_j + P^j_i \right)
			+ b\, N^i_{jk}\,N^i_{jk}
    		+ c\,N^i_{jk,l}\,N^i_{jk,l}\\\nonumber
    &\quad\quad
			+ d \left( u^i_{,j} - P^i_j \right) \left( u^i_{,j} - P^i_j \right)
    		+ e \left( P^i_{j,k} - N^i_{jk} \right) \left( P^i_{j,k} - N^i_{jk} \right)\, \mathrm{d} X,
\end{align}
where Einstein summation convention is used (repeated indices are summed) and will be for the remainder of this article.\\

In the present paper, we present a family of beam models, obtained from the tridimensional model detailed above under some kinematic constraints. Crucially, those models are valid for bending, tension/compression, twisting or any other kind of deformation, as long as it remains in the realm of small deformations. This family is composed of three models, distinguished by their holonomy constraints. Namely, we shall look at the following set of constraints:

\begin{enumerate}
    \item \textbf{holonomic case --} the constraints \(\nabla u = \at{P}_{1\ldots n}\) and \(\nabla P = N\) are imposed. These imply, by the divergence theorem, that the auxiliary fields \(\Df = 0\), \(\Ef = 0\), \(f_1 = 0\) and \(f_2 = 0\). Under these assumptions, the model depends solely on the displacement field \(u\), and thus reduces to a third gradient theory of elasticity. When \(\Cf = 0\), this is a second gradient theory, as developed by Mindlin, Germain and others~\autocite{MindlinSecondGradientStrain1965,MindlinEshelFirstStraingradientTheories1968,GermainMethodePuissancesVirtuelles1973}.

    \item \textbf{semi-holonomic case --} in this intermediate case, only the constraint \(\nabla P = N\) is enforced, leading to \(\Ef = 0\) and \(f_2 = 0\). This formulation involves both \(u\) and \(P\) as degrees of freODEm, both independent from each other in the bulk. When in addition \(\Cf = 0\), the model corresponds to the classical micromorphic theory developed by Mindlin, Eringen, Toupin, and others\,\cite{MindlinMicrostructureLinearElasticity1964,EringenFoundationsMicropolarThermoelasticity1970,EringenMechanicsMicromorphicContinua1969,EringenTheoryMicropolarElasticity1968,EringenMicrocontinuumFieldTheories1999,EringenClausMicromorphicApproachDislocation1970,ToupinTheoriesElasticityCouplestress1964,ToupinElasticMaterialsCouplestresses1962}.

    \item \textbf{non-holonomic case --} no constraint is imposed. The field \(u\), \(P\) and \(N\) are supposed independent in the bulk.
\end{enumerate}

\Cref{sec_1d_in_3d} is dedicated to the construction of the beam models from the tridimensional model presented above. Using the transversal smallness of the beam, a kinematic hypothesis is introduced, allowing to rephrase the tridimensional model in terms of kinematically independent unidimensional fields. In \cref{sec_variational_principle}, variationnal calculus is used to obtain the Euler-Lagrange equations -- a complete system of ordinary second-order linear differential equation -- associated to static equilibrium in the non-holonomic ,semi-holonomic and holonomic case. Lastly, \cref{sec_study_system} is focussed on a qualitative and quantitative analysis of the aforementioned systems. In particular, simple excitation schemes, allowing for significant simplifications of the system, are highlighted and the semi-holonomic (resp. holonomic) system is proved to be the asymptotic limits of the non-holonomic (resp. semi-holonomic) system when \(e \to \infty\) (resp. \(d \to \infty\)).

\section{The 1D in 3D model}

\label{sec_1d_in_3d}

A beam is a medium in which one dimension is significantly larger than the others, negligible in comparison. The large dimension is referred to as the longitudinal direction, while the other are referred to as transversal. By convention, we fix the coordinate system so that the longitudinal coordinate corresponds to the first coordinate. Furthermore, \emph{while latin indices and exponents will be valued in \(\{1,2,3\}\), greek ones will be valued in \(\{2,3\}\).} We introduce the following notation:
\begin{align}
    \overline{A}\left( X^1 \right) &:= A\left( X^1, 0, 0 \right)
    &\text{for any tensor \(A\)}
\end{align}
Due to their small magnitude, the transversal coordinates will have a small impact on the displacement. One can therefore safely neglect higher order terms and approximate a tensor \(A\) by its truncated Taylor expansion at rank \(r \in \mathbb{N}\):
\begin{align}
    A\left( X^1, X^2, X^3 \right) &\simeq \overline{A} + \overline{A_{,\alpha}} \, X^\alpha + \frac 1{2}\, \overline{A_{,\alpha_1\alpha_2}} \, X^{\alpha_1}\, X^{\alpha_2} + \cdots + \frac 1{r!}\, \overline{A_{,\alpha_1\cdots\alpha_r}} \, X^{\alpha_1} \cdots X^{\alpha_r}.
\end{align}
One can then replace the 3-dimensional tensor field \(A\) by \(r+1\) unidimensional tensor fields. Furthermore, if the transversal dimensions are extremely small, then one can effectively assimilate them with the microscopic dimensions. This interpretation motivates the following kinematic assumption:
\begin{align}
    \label{hypo_1d_3d}
    u^i_{,\alpha} &\simeq P^i_\alpha,&
    P^i_{j,\alpha} &\simeq N^i_{j\alpha}.
\end{align}

We then choose to take the smallest possible rank for the truncated Taylor expansion of \(N\). This corresponds to assuming
\begin{align}
    \label{hypo_1d_3d_non_holo}
    N^i_{jk,\alpha} &\simeq 0~.
\end{align}

Note that those approximations immediately imply:
\begin{align}
    N^i_{jl}\left( X^1, X^2, X^3 \right) &\simeq \overline{N}^i_{jl}\left( X^1\right),\\\nonumber
    P^i_j\left( X^1, X^2, X^3 \right) &\simeq \overline{P}^i_j\left( X^1\right) + \overline{N}^i_{j\alpha}\left( X^1 \right) X^\alpha,\\\nonumber
    u^i\left( X^1, X^2, X^3 \right) &\simeq \overline{u}^i\left( X^1 \right) + \overline{P}^i_\alpha\left( X^1 \right) X^\alpha.
\end{align}

The assumptions of \cref{hypo_1d_3d,hypo_1d_3d_non_holo} effectively kill all contributions of the transversal derivatives in the internal energy, which becomes:
\begin{align}
    \label{eq_energy_1d_raw}
    &\int_\mathbb{B}
        \frac a4 \left ( \ov{P^i_j} + \ov{N^i_{j\alpha}}\, X^\alpha + \ov{P^j_i} + \ov{N^j_{i\beta}}\, X^\beta \right )\left ( \ov{P^i_j} + \cdots + \ov{N^j_{i\gamma}}\, X^\gamma \right )
        + b\,\ov{N^i_{jk}}\,\ov{N^i_{jk}}\\\nonumber
    &\qquad
        + c\, \ov{N^i_{jk,1}}\, \ov{N^i_{jk,1}}
        + d \left ( \ov{u^i_{,1}} + \ov{P^i_{\alpha,1}}\, X^\alpha
            - \ov{P^i_1} - \ov{N^i_{1\beta}}\, X^\beta \right ) \left(\ov{u^i_{,1}} + \cdots - \ov{N^i_{1\mu}}\, X^\mu  \right)\\\nonumber
    &\qquad
        + e \left ( \ov{P^i_{j,1}} + \ov{N^i_{j\alpha,1}}\, X^\alpha - \ov{N^i_{j1}} \right )\left ( \ov{P^i_{j,1}} + \ov{N^i_{j\beta,1}}\, X^\beta - \ov{N^i_{j1}} \right )\\\nonumber
    &\quad\quad
    - f_0 \cdot u
    - f_1 : P
    - f_2 \tridots N\, \mathrm{d}X
    - \int_{\partial \mathbb{B}} T_0 \cdot u + T_1 : P + T_2 \tridots N\, \mathrm{d} X\,.
\end{align}

In order to simplify the above integrals, we choose to model the material as \(\mathbb{B} := \overline{\mathbb{B}} \times \transverse\) where \(\overline{\mathbb{B}} := \intcc 0L\), \(L \in \mathbb{R}^*_+\) is the longitudinal characteristic length and \(\transverse\) is the bidimensional shape of the transversal slices\footnote{The \(\mathcal{F}\) stands for \textit{fibre}, which is the mathematical name of what beam theorists usually call \textit{transversal section}. Similarly, the engineers' \textit{central fibre} is mathematically coined a \textit{section}. We shall use the terms \textit{transversal slice} (or plane) and \textit{central curve} respectively in order to avoid such confusion in the nomenclature.} (any connected bounded locally \(\mathcal{C}^2\) region of \(\mathbb{R}^2\)). We make the hypothesis that the coordinate system is chosen such that \(\left( X^1,  0, 0 \right)\) is the geometric center of \(\left\{ X^1 \right\} \times \transverse\). This means that most cross-terms vanish and the internal energy becomes:
\begin{align}
    &\int_\mathbb{B}
        \frac a4 \left ( \ov{P^i_j} + \ov{P^j_i} \right )\left ( \ov{P^i_j} + \ov{P^j_i} \right )
         + \frac a4 \left (\ov{N^i_{j\alpha}} + \ov{N^j_{i\alpha}} \right )\left (\ov{N^i_{j\beta}} + \ov{N^j_{i\beta}} \right ) X^\alpha\, X^\beta
         + b\, \ov{N^i_{jk}}\,\ov{N^i_{jk}}
    \\\nonumber
    &\qquad
        + c\, \ov{N^i_{jk,1}}\, \ov{N^i_{jk,1}}
        + d \left ( \ov{u^i_{,1}} - \ov{P^i_1} \right ) \left ( \ov{u^i_{,1}} - \ov{P^i_1} \right )
        + d \left ( \ov{P^i_{\alpha,1}} - \ov{N^i_{1\alpha}} \right )\left ( \ov{P^i_{\beta,1}} - \ov{N^i_{1\beta}} \right ) X^\alpha\, X^\beta
    \\\nonumber
    &\qquad
        + e \left ( \ov{P^i_{j,1}} - \ov{N^i_{j1}} \right )\left ( \ov{P^i_{j,1}} - \ov{N^i_{j1}} \right )
        + e \,\ov{N^i_{j\alpha,1}}\,\ov{N^i_{j\beta,1}}X^\alpha\, X^\beta
    \, \mathrm{d}X\,.
\end{align}

Then, by partially integrating the bulk integral of \cref{eq_energy_1d_raw} along the transversal coordinate, one  obtains an integral along the unidimensional body \(\overline{\mathbb{B}}\):
\begin{align}
    \label{eq_energy_non_holo_1d}
    &\int_{\overline{\mathbb{B}}}
        \frac a4 \left ( \ov{P^i_j} + \ov{P^j_i} \right )\left ( \ov{P^i_j} + \ov{P^j_i} \right )
         + \frac a4 \left (\ov{N^i_{j\alpha}} + \ov{N^j_{i\alpha}} \right )\left (\ov{N^i_{j\beta}} + \ov{N^j_{i\beta}} \right ) \mathrm{I}^{\alpha\beta}
         + b\, \ov{N^i_{jk}}\,\ov{N^i_{jk}}
    \\\nonumber
    &\qquad
        + c\, \ov{N^i_{jk,1}}\, \ov{N^i_{jk,1}}
        + d \left ( \ov{u^i_{,1}} - \ov{P^i_1} \right ) \left ( \ov{u^i_{,1}} - \ov{P^i_1} \right )
        + d \left ( \ov{P^i_{\alpha,1}} - \ov{N^i_{1\alpha}} \right )\left ( \ov{P^i_{\beta,1}} - \ov{N^i_{1\beta}} \right ) \mathrm{I}^{\alpha\beta}
    \\\nonumber
    &\qquad
        + e \left ( \ov{P^i_{j,1}} - \ov{N^i_{j1}} \right )\left ( \ov{P^i_{j,1}} - \ov{N^i_{j1}} \right )
        + e \,\ov{N^i_{j\alpha,1}}\,\ov{N^i_{j\beta,1}}\mathrm{I}^{\alpha\beta}
    \, \mathrm{d}X^1\,,
\end{align}
where \(\mathrm{I}^{\alpha\beta} = \int_\transverse X^\alpha\, X^\beta\, \mathrm{d} X\), which is diagonal and, for simplicity, assumed constant on \(\overline{\mathbb{B}}\). Interestingly, one obtains an antisymmetric coupling involving
\begin{align}
    d \left( \ov{P^i_{\alpha,1}} - \ov{N^i_{1\alpha}} \right) \left( \ov{P^i_{\beta,1}} - \ov{N^i_{1\beta}} \right) \mathbb{I}^{\alpha\beta}
        &\simeq \frac d2 \left( \ov{P^i_{\alpha,1}} - \overline{P^i_{1,\alpha}} \right) \left( \ov{P^i_{\beta,1}} - \overline{P^i_{1,\beta}} \right) \mathbb{I}^{\alpha\beta}\\\nonumber
        &\simeq \frac d2 \left[ \op{Curl}\ov P \right]^i_\alpha\,\left[ \op{Curl}\ov P \right]^i_\beta\, \mathbb{I}^{\alpha\beta}
\end{align}
which is a quadratic contraction of the tensor \(\op{Curl} P\) -- defined in \autocite{NeffEtAlUnifyingPerspectiveRelaxed2014}. That is, it is proportional to the density of dislocations. Although it appears here as a result of homogenisation, the use of such a term in bidimensional and tridimensional media was already proposed in the relaxed micromorphic model~\cite{NeffEtAlUnifyingPerspectiveRelaxed2014,GhavanlooNeffIsotropicRelaxedMicromorphic2024} -- developed by the last author of the present article -- and subsequently motivated in later articles~\cite{RizziEtAlAnalyticalSolutionsCylindrical2021,SarhilEtAlComputationalApproachIdentify2024}.\\

The problem is not yet entirely formulated on \(\overline{\mathbb{B}}\). Indeed, the tridimensional energy is also composed of bulk and boundary forces, integrated along \( \mathbb{B}\) and \(\partial\mathbb{B}\). The former can be partially integrated as above. For the latter, one has:
\begin{align}
    \label{eq_boundary_beam}
    \partial \mathbb{B} &=
        \{0, L\} \times \transverse \quad \sqcup \quad  \intoo 0L \times \partial \transverse\,.
\end{align}

Boundary forces therefore see their integral split into an integral on the longitudinal boundary \(\times\) transversal bulk -- this is the first term in \cref{eq_boundary_beam}, which yields a homogenised boundary force -- and an integral on the longitudinal bulk \(\times\) transversal boundary -- this is the second term in \cref{eq_boundary_beam}, which yields a homogenised bulk force. The model obtained this way has a unidimensional material, but a tridimensional ambient space. We call it the 1D in 3D model, in contrast with the 3D in 3D model from which it was derived.\\

The actual value of the homogenised forces in terms of the forces of the tridimensional model is irrelevant for this article. The main point is that this correspondence is surjective. It means that one can treat the 1D in 3D case as a beam model on its own -- without any care for what the 3D in 3D constitutive variables are -- and interpret it as coming from a 3D in 3D beam. We therefore abusively denote the external energy as:
\begin{align}
    \label{eq_energy_non_holo_1d_boundary}
    &\int_{\overline{\mathbb{B}}}
      f_0 \cdot \ov{u}
    + f_1 : \ov{P}
    + f_2 \tridots \ov{N}\, \mathrm{d}X^1
    + \int_{\partial \overline{\mathbb{B}}} T_0 \cdot \ov{u} + T_1 : \ov{P} + T_2 \tridots \ov{N}\, \mathrm{d} X^1,
\end{align}

where the forces in \cref{eq_energy_non_holo_1d_boundary} are actually obtained from the above-mentioned procedure and are therefore technically different from those of \cref{eq_energy_1d_raw}. This is not a problem as the 3D in 3D case will not be use further in this paper.

\subsection*{The semi-holonomic case}
\label{sec_1d_in_3d_semi_holo}

In the semi-holonomic case, one has \(N^i_{jk} = P^i_{j,k}\) but \(P^i_j\) and \(u^i_{,j}\) are still independent. This implies:
\begin{align}
    N^i_{j\alpha,k} &= P^i_{j,\alpha k} = P^i_{j,k \alpha} = N^i_{jk,\alpha} = 0\,.
\end{align}
This remark implies that one has:
\begin{align}
    P^i_j\left( X^1, X^2, X^3 \right) &= \overline{P}^i_j\left( X^1\right) + \overline{N}^i_{j\alpha} X^\alpha,\\\nonumber
    u^i\left( X^1, X^2, X^3 \right) &= \overline{u}^i\left( X^1 \right) + \overline{P}^i_\alpha\left( X^1 \right) X^\alpha + \overline{N}^i_{\alpha\beta} \frac {X^\alpha\,X^\beta}2,
\end{align}
where \(\overline{N}^i_{j\alpha}\) and \(\overline{N}^i_{\alpha\beta}\) are constants. Substituting \( \ov{P^i_{j,1}}\) for \( \ov{N^i_{j1}}\), the homogenised internal energy then becomes:
\begin{align}
    &\int_{\overline{\mathbb{B}}}
        \frac a4 \left ( \ov{P^i_j} + \ov{P^j_i} \right )\left ( \ov{P^i_j} + \ov{P^j_i} \right )
         + \frac a4 \left (\ov{N^i_{j\alpha}} + \ov{N^j_{i\alpha}} \right )\left (\ov{N^i_{j\beta}} + \ov{N^j_{i\beta}} \right ) \mathrm{I}^{\alpha\beta}
    \\\nonumber
    &\qquad
         + b \,\ov{P^i_{j,1}}\, \ov{P^i_{j,1}}
         + b \,\ov{N^i_{j\beta}}\, \ov{N^i_{j\beta}}
    \\\nonumber
    &\qquad
        + c\, \ov{P^i_{j,11}}\, \ov{P^i_{j,11}}
        + d \left ( \ov{u^i_{,1}} - \ov{P^i_1} \right ) \left ( \ov{u^i_{,1}} - \ov{P^i_1} \right )
        + d \left ( \ov{P^i_{\alpha,1}} - \ov{N^i_{1\alpha}} \right )\left ( \ov{P^i_{\beta,1}} - \ov{N^i_{1\beta}} \right ) \mathrm{I}^{\alpha\beta}
    \, \mathrm{d}X^1,
\end{align}

In this article, we choose to \emph{focus on semi-holonomic problems where \(\ov{N}^i_{j\alpha}\) is entirely constrained} by at least one boundary condition. This implies that \(\ov{N}^i_{j\alpha}\) can be considered as a constitutive constant (\ie{} given), rather than a kinematic one (\ie{} to be determined). In particular, since adding a constant to the energy does not change anything, the internal energy can be reformulated into:
\begin{align}
    \label{eq_energy_semi_holo_1d}
    &\int_{\overline{\mathbb{B}}}
        \frac a4 \left ( \ov{P^i_j} + \ov{P^j_i} \right )\left ( \ov{P^i_j} + \ov{P^j_i} \right )
        + b\, P^i_{1,1}\, P^i_{1,1}
        + \left( b + \frac{d\,\ell^4}{12} \right) \ov{P^i_{\alpha,1}}\,\ov{P^i_{\alpha,1}}
    \\\nonumber
    &\qquad
        + c\, \ov{P^i_{j,11}}\, \ov{P^i_{j,11}}
        + d \left ( \ov{u^i_{,1}} - \ov{P^i_1} \right ) \left ( \ov{u^i_{,1}} - \ov{P^i_1} \right )
    \, \mathrm{d}X^1
    \\\nonumber
    &\qquad\qquad- \int_{\overline{\mathbb{B}}}
         \frac{d\,\ell^4}{6}\,\ov{N^i_{1\alpha}}\, \ov{P^i_{\alpha,1}}
    \, \mathrm{d}X^1
    +\mathrm{constant}\,,
\end{align}
where, for simplicity, we assumed the transversal symmetry \(\mathrm{I}^{11} = \mathrm{I}^{22} =: \frac {\ell^4}{12}\). Note that this does not mean that \(\transverse\) has to be a square. If, for example, \(\transverse\) is circular then one can replace \(\frac {\ell^4}{12}\) by \(\frac {\pi\, \ell^4}{32}\), where \(\ell\) would be the transversal diameter. Since \(\ell\) is generic, we shall use \(\frac{\ell^4}{12}\) without any loss of generality.\\

When \(P \in \mathfrak{so}(3)\) is antisymmetric -- that is, when the transversal slices are infinitesimally rigidly transformed -- one retrieves the kinematics of Cosserat beams. Furthermore, when the material deformation is constrained in the \(\left( X^1, X^2 \right)\) plane, \(P \in \mathfrak{so}(2)\) and one retrieves the kinematic of Timoshenko beams. When \(c = 0\), \(\ell = 0\) and \(f_2 = 0\), this becomes exactly the energy of the Timoshenko beam, a first-order theory where \(b = E\, I\) and \(d = \kappa\,G\, A\).\footnote{\(E\) is Young's elastic modulus. \(I = \mathrm{I}^{\alpha\alpha}\) (summed) is the second moment of transversal area. \(\kappa\) is the Timoshenko shear coefficient, a function of the shape \(\transverse\) with \(\kappa = \frac 56\) for a rectangular transversal shape. \(A\) is the area of the transversal shape \(\transverse\).} An extension to the \(c \neq 0\) case can be seen in \autocite[eq. 33]{WangEtAlMicroScaleTimoshenko2010}. The latter model is more general but reduces to the semi-holonomic model of this paper for \(\left( k_1, k_2, k_3, k_4, k_5 \right) =  \left(c, b, 0, 0, d\right)\), which implies \(l_1=l_2=0\) \ie{} no deviatoric stretch gradient nor symmetric rotation gradient contribution. A similar model can also be obtained from a study of the Timoshenko beam on Pasternak foundation (\ie{} laying on a set of springs with shear interactions)~\autocite{HarizEtAlBucklingTimoshenkoBeam2022}.

\subsection*{The holonomic case}

In the holonomic case, one has \(N^i_{jk} = P^i_{j,k}\) and \(P^i_j = u^i_{,j}\). The latter implies:
\begin{align}
    P^i_{\alpha,11} &= u^i_{,\alpha11} = u^i_{,11\alpha} = P^i_{1,1\alpha} = N^i_{11,\alpha} = 0
\end{align}

Since we only consider the case where \(\ov{N}\) is entirely constrained by at least one boundary condition, we can consider \(\ov{P^i_{\alpha,1}} = \ov{u^i_{\alpha,1}} = \ov{N^i_{\alpha1}}\) as a constitutive constant (\ie{} given), rather then a kinematic one (\ie{} to be determined). Notice that since \(\overline{u^i_{,\alpha11}} = 0\), one has
\begin{align}
    \overline{u^i_{,\alpha}}\left( X^1 \right) = \overline{u^i_{,\alpha}}(0) + \overline{u^i_{,\alpha1}}\, X^1\,.
\end{align}
Once again, since adding a constant to the energy does not change anything, the internal energy can be reformulated into:
\begin{align}
    \int_{\overline{\mathbb{B}}}
        \frac a4 \left ( \overline{u^i_{,j}} + \overline{u^j_{,i}} \right )\left ( \overline{u^i_{,j}} + \overline{u^j_{,i}} \right )
        + b\, \ov{u^i_{,11}}\, \ov{u^i_{,11}}
        + c\, \ov{u^i_{,111}}\, \ov{u^i_{,111}}
        \, \mathrm{d}X^1
        + \mathrm{constant}\,.
\end{align}
Furthermore, since \(\ov N^i_{\alpha1} = \ov P^i_{\alpha,1}\) is constant, \(\ov P^i_\alpha= \ov u^i_{,\alpha}\) is affine. If \(\ov P\) is prescribed on at least one boundary, then \(\ov{u^i_{,\alpha}}\) can be considered as a given constant. The internal energy can then be reformulated into:
\begin{align}
    \label{eq_energy_holo_1d}
    \int_{\overline{\mathbb{B}}}
        a\, \ov{u^i_{,1}}\,\ov{u^i_{,1}}
        + b\, \ov{u^i_{,11}}\, \ov{u^i_{,11}}
        + c\, \ov{u^i_{,111}}\, \ov{u^i_{,111}}
        \, \mathrm{d}X^1
        + \mathrm{constant}\,.
\end{align}

When the material's deformation is constrained to be in the \(\left( X^1, X^2 \right)\) plane and \(\nabla u = P \in \mathfrak{so}(2)\) is antisymmetric -- that is, when the transversal slices are rigidly transformed -- one obtains the kinematic of the Euler-Bernoulli beam. When \(c=0\), this is exactly the internal energy of the Euler-Bernoulli beam, where \(b = E\, I\) (the contribution of \(a\) is killed by the antisymmetry). In \autocite{LurieSolyaevRevisitingBendingTheories2018}, one can find an extension of Euler-Bernoulli beams to strain gradient theory, in which \(c\) is of the order of \(l^2\,b\). In particular, \[b \gg c\] This model is based on an interpretation of the microstructure due to Mindlin \autocite{MindlinMicrostructureLinearElasticity1964} but, as mentioned by the authors, can also be obtained using a simple gradient elasticity theory with surface tension \autocite[eq. 21 p. 389]{Papargyri-BeskouEtAlBendingStabilityAnalysis2003} which leads to the same equations when their surface term \(\ell \to 0\).\\

\subsection*{Defects}

The present model is based on the model developed in \autocite{CrespoEtAlTwoScaleGeometricModelling2024}. One of the main goals of the latter was the development of a model that could exhibit both disclinations and dislocations from the kinematics alone. One may therefore ask what those are in the present beam model.\\

Dislocations are associated with the torsion tensor \(\mathbf{T}\) (see \autocite{EpsteinGeometricalLanguageContinuum2010}) of the material connection of \autocite{CrespoEtAlTwoScaleGeometricModelling2024}. In the current linear setting, it linearises into 
\begin{align}
    \mathbf{T}^i_{jk} &= N^i_{jk} - N^i_{kj}\,,
\end{align}
which is antisymmetric and therefore has only \(9\) independent components. Notice that, in the semi-holonomic case it simplifies to:
\begin{align}
    \mathbf{T}^i_{jk}
        &= \overline{P^i_{j,k}} - \overline{P^i_{k,j}}
        &\text{if \(N = \nabla P\)}\\\nonumber
        &= \epsilon_{jk\ell}\left[ \op{Curl}P \right]^i_\ell
\end{align}
where \(\epsilon\) is the Levi-Civita fully antisymmetric tensor. In the holonomic case, \(\op{Curl}P = \op{Curl}\nabla u = 0\) and no dislocation can exist. Timoshenko and Cosserat beams can therefore be seen as a relaxation of Euler-Bernoulli beams, allowing the presence of dislocations.\\

Disclinations on the other hand are associated with the curvature tensor \(\mathbf{R}\) (see \autocite{EpsteinGeometricalLanguageContinuum2010}) of the material connection of \autocite{CrespoEtAlTwoScaleGeometricModelling2024}. In the current linear setting, it linearises into:
\begin{align}
    \mathbf{R}^i_{jkl} &= N^i_{jk,l} - N^i_{jl,k}\,,&
    \mathbf{R}^i_{j\alpha1} &= N^i_{j\alpha,1}\,,&
    \mathbf{R}^i_{j\alpha\beta} &= 0\,,
\end{align}
which has \(18\) independent components. Notice that, in the semi-holonomic case, this is identically zero. Our model can therefore be seen as a relaxation of the Timoshenko and Cosserat beams, allowing the presence of disclinations.

\section{The variational principle}
\label{sec_variational_principle}

We now look at the associated static equilibrium problem. The latter corresponds to the minimisation of the total energy in terms of the kinematic variables \(\ov{u}\), \(\ov{P}\) and \(\ov{N}\). This minimisation can be expressed as the solution of the associated Euler-Lagrange equations~\cite{GermainMethodePuissancesVirtuelles1973,GermainMethodVirtualPower2020}. These equations are obtained from the statement that the differential of the total energy, with respect to the kinematic variables, is zero. This technique is called variational calculus.\\

In order to formalise it, let \(\mathfrak{K}\) be the set of physically allowed kinematic tuples \((\ov{u}, \ov{P}, \ov{N})\). This set is a subset of the real vector-space \(\overline{\mathbb{B}} \to \mathbb{R}^3 \times \mathbb{R}^{3\times3} \times \mathbb{R}^{3\times3\times3}\). The assumption of the calculus of variation is that \(\mathfrak{K}\) is a \(\mathcal{C}^1\) sub-manifold. This means that infinitesimal variations \((\delta\ov{u}, \delta\ov{P}, \delta\ov{N})\) of the kinematic variables \((\ov{u}, \ov{P}, \ov{N})\) are well-defined. These form a space \(\mathrm{T}_{(\ov{u}, \ov{P}, \ov{N})}\mathfrak{K}\) called the tangent space of \(\mathfrak{K}\) at \((\ov{u}, \ov{P}, \ov{N})\). Let therefore, for the remainder fo this article, denote \emph{the equilibrium configuration} of the system by \((\ov{u}, \ov{P}, \ov{N})\). The energy \(\Psi : \mathfrak{K} \to \mathbb{R}\) is smooth in the kinematic fields. In particular, it means its differential at \((\ov{u}, \ov{P}, \ov{N})\) is a well-defined application \(\mathrm{T}_{(\ov{u}, \ov{P}, \ov{N})}\Psi : \mathrm{T}_{(\ov{u}, \ov{P}, \ov{N})}\mathfrak{K} \to \mathbb{R}\). Calculus of variation states that the total energy is stationary at equilibrium, meaning that \(\mathrm{T}_{(\ov{u}, \ov{P}, \ov{N})}\Psi = \bold 0\). That is, for all infinitesimal variations \((\delta\ov{u}, \delta\ov{P}, \delta\ov{N})\) of the kinematic variables, one has \(\mathrm{T}_{(\ov{u}, \ov{P}, \ov{N})}\Psi \cdot (\delta\ov{u}, \delta\ov{P}, \delta\ov{N}) = 0\). Upon calculation, this yields (c.f. \cref{eq_energy_non_holo_1d,eq_energy_non_holo_1d_boundary}):
\begin{align}
	\forall \left( \delta \ov{u}, \delta \ov{P}, \delta \ov{N} \right) \in \mathrm{T}_{(\ov{u}, \ov{P}, \ov{N})}\mathfrak{K},\hspace{-7em}&\\\nonumber
	0 &= \int_{\overline{\mathbb{B}}}
		d \left( \ov{u^i_{,1}} - \ov{P^i_1} \right) \delta \ov{u^i_{,1}}
        - \left[ f_0 \right]_{i}\,\delta \ov{u^i}~\mathrm{d} X^1
    - \int_{\partial \overline{\mathbb{B}}}
        \left[ T_0 \right]_i\, \delta \ov{u^i}~\mathrm{d} X^1,\\\nonumber
	0 &= \int_{\overline{\mathbb{B}}}
		\frac a2 \left( \ov{P^i_j} + \ov{P^j_i} \right) \delta \ov{P^i_j}
		- d \left( \ov{u^i_{,1}} - \ov{P^i_1} \right) \delta \ov{P^i_{1}} \, \delta^1_i
        + \frac{d\,\ell^4}{12} \left( \ov{P^i_{\alpha,1}} - \ov{N^i_{1\alpha}} \right)\delta \ov{P^i_{\alpha,1}}\\\nonumber
    &\qquad
		+ e \left( \ov{P^i_{j,1}} - \ov{N^i_{j1}} \right) \delta \ov{P^i_{j,1}}
        - \left[ f_1 \right]^{j}_i\,\delta \ov{P^i_j} ~\mathrm{d} X^1
    - \int_{\partial \overline{\mathbb{B}}}
        \left[ T_1 \right]_i^j\, \delta \ov{P^i_j}~\mathrm{d} X^1,\\\nonumber
	0 &= \int_{\overline{\mathbb{B}}}
        \frac {a\,\ell^4}{24} \left( \ov{N^i_{j\alpha}} + \ov{N^j_{i\alpha}} \right)\delta \ov{N^i_{j\alpha}}
		+ b\,\ov{N^i_{jk}}\,\delta \ov{N^i_{jk}}
		+ c\,\ov{N^i_{jk,1}}\,\delta \ov{N^i_{jk,1}}\\\nonumber
    &\qquad
        - \frac{d\,\ell^4}{12}\left( \ov{P^i_{\alpha,1}} - \ov{N^i_{1\alpha}} \right) \delta_1^j\, \delta \ov{N^i_{j\alpha}} 
		- e \left( \ov{P^i_{j,1}} - \ov{N^i_{j1}} \right)\delta \ov{N^i_{j1}}\,\delta^1_k
        + \frac{e\,\ell^4}{12}\, \ov{N^i_{j\alpha,1}} \, \delta \ov{N^i_{j\alpha,1}}\\\nonumber
    &\qquad
        - \left[ f_2 \right]^{jk}_i\,\delta \ov{N^i_{jk}}~\mathrm{d} X^1
    - \int_{\partial \overline{\mathbb{B}}}
        \left[ T_2 \right]_i^{jk}\, \delta \ov{N^i_{jk}}~\mathrm{d} X^1\,.
\end{align}
Using integration by parts, the above become:
\begin{align}
    \label{eq_weak_generic}
	\forall \left( \delta \ov{u}, \delta \ov{P}, \delta \ov{N} \right) \in \mathrm{T}_{(\ov{u}, \ov{P}, \ov{N})}\mathfrak{K},\hspace{-7em}&\\\nonumber
	0 &= \int_{\overline{\mathbb{B}}}
		-d \left( \ov{u^i_{,11}} - \ov{P^i_{1,1}} \right) \delta \ov{u^i}
        - \left[ f_0 \right]_{i}\,\delta \ov{u^i}~\mathrm{d} X^1\\\nonumber
    &\qquad
    + \int_{\partial \overline{\mathbb{B}}}
        d \left( \ov{u^i_{,1}} - \ov{P^i_1} \right) \delta \ov{u^i}
        - \left[ T_0 \right]_i\,\delta \ov{u^i}
        ~\mathrm{d} X^1,\\\nonumber
	0 &= \int_{\overline{\mathbb{B}}}
		\frac a2 \left( \ov{P^i_j} + \ov{P^j_i} \right) \delta \ov{P^i_j}
		- d \left( \ov{u^i_{,1}} - \ov{P^i_1} \right) \delta \ov{P^i_{1}} \, \delta^1_i
        - \frac{d\,\ell^4}{12} \left( \ov{P^i_{\alpha,11}} - \ov{N^i_{1\alpha,1}} \right)\delta \ov{P^i_{\alpha}}\\\nonumber
    &\qquad
		- e \left( \ov{P^i_{j,11}} - \ov{N^i_{j1,1}} \right) \delta \ov{P^i_j}
        - \left[ f_1 \right]^{j}_i\,\delta \ov{P^i_j}
        ~\mathrm{d} X^1\\\nonumber
    &\qquad
    + \int_{\partial \overline{\mathbb{B}}}
        \frac{d\,\ell^4}{12} \left( \ov{P^i_{\alpha,1}} - \ov{N^i_{1\alpha}} \right)\delta \ov{P^i_{\alpha}}
        + e \left( \ov{P^i_{j,1}} - \ov{N^i_{j1}} \right) \delta \ov{P^i_{j}}
        - \left[ T_1 \right]_i^j\, \delta \ov{P^i_j}
        ~\mathrm{d} X^1,\\\nonumber
	0 &= \int_{\overline{\mathbb{B}}}
        b\,\ov{N^i_{jk}}\,\delta \ov{N^i_{jk}}
		- c\,\ov{N^i_{jk,11}}\,\delta \ov{N^i_{jk}}
		- e \left( \ov{P^i_{j,1}} - \ov{N^i_{j1}} \right)\delta \ov{N^i_{j1}}\,\delta^1_k
        - \left[ f_2 \right]^{jk}_i\,\delta \ov{N^i_{jk}}
        ~\mathrm{d} X^1\\\nonumber
    &\qquad
    + \frac {\ell^4}{12}\, \int_{\overline{\mathbb{B}}}
        \frac {a}{2} \left( \ov{N^i_{j\alpha}} + \ov{N^j_{i\alpha}} \right)\delta \ov{N^i_{j\alpha}}
        - d\left( \ov{P^i_{\alpha,1}} - \ov{N^i_{1\alpha}} \right) \delta_1^j\, \delta \ov{N^i_{j\alpha}} 
        - e\, \ov{N^i_{j\alpha,11}} \, \delta \ov{N^i_{j\alpha}}
    ~\mathrm{d} X^1\\\nonumber
    &\qquad
    + \int_{\partial \overline{\mathbb{B}}}
        c\,\ov{N^i_{jk,1}}\,\delta \ov{N^i_{jk}}
        + \frac{e\,\ell^4}{12}\, \ov{N^i_{j\alpha,1}} \, \delta \ov{N^i_{j\alpha}}
        - \left[ T_2 \right]_i^{jk}\, \delta \ov{N^i_{jk}}
        ~\mathrm{d} X^1\,.
\end{align}

For simplicity, we assume in this article that only anchorings are allowed on the boundary. That is, for all \(X_0^1 \in \partial \overline{\mathbb{B}}\), the value at \(X_0^1\) of any kinematic variable appearing in the (boundary contribution of) the energy is either free -- meaning it can theoretically take any value -- or anchored -- meaning it is restricted to a given value.

\subsection*{Non-holonomic Euler-Lagrange equations}

If no constraint is put on the kinematic variables in the bulk \(\overline{\mathbb{B}} \setminus \partial\overline{\mathbb{B}}\), then the tangent space \(\mathrm{T}_{(\ov{u}, \ov{P}, \ov{N})}\mathfrak{K} = \mathbb{R}^3 \times \mathbb{R}^{3\times3} \times \mathbb{R}^{3\times3\times3}\) is maximal. In particular the problem is non-holonomic. This effectively means that one can localise (see \autocite{GonzalezStuartFirstCourseCountinuum2008}) the above equations into:
\makeatletter
\tagsleft@true
\begin{align}
    \label{eq_euler_lagrange_non_holo}
	\left[ f_0 \right]_i &=
		-d \left( \ov{u^i_{,11}} - \ov{P^i_{1,1}} \right)
        &\text{on \(\overline{\mathbb{B}}\)},\\\nonumber
    \left[ f_1 \right]_i^j &= 
		\frac a2 \left( \ov{P^i_j} + \ov{P^j_i} \right)
		- d \left( \ov{u^i_{,1}} - \ov{P^i_1} \right) \delta^1_j
        - \frac{d\, \ell^4}{12} \left( \ov{P^i_{\alpha,11}} - \ov{N^i_{1\alpha,1}} \right) \delta^\alpha_j
		- e \left( \ov{P^i_{j,11}} - \ov{N^i_{j1,1}} \right)
        &\text{on \(\overline{\mathbb{B}}\)},\\\nonumber
    \left[ f_2 \right]_i^{j1} &= 
        b\,\ov{N^i_{j1}}
		- c\,\ov{N^i_{j1,11}}
		- e \left( \ov{P^i_{j,1}} - \ov{N^i_{j1}} \right)
        &\text{on \(\overline{\mathbb{B}}\)},\\\nonumber
    \left[ f_2 \right]_i^{j\alpha} &= 
        b\,\ov{N^i_{j\alpha}}
        - c\,\ov{N^i_{j\alpha,11}}
        + \frac{\ell^4}{12}\left(
        \frac {a}{2} \left( \ov{N^i_{j\alpha}} + \ov{N^j_{i\alpha}} \right)
        - d\left( \ov{P^i_{\alpha,1}} - \ov{N^i_{1\alpha}} \right) \delta_1^j
        - e\, \ov{N^i_{j\alpha,11}}\right)
        &\text{on \(\overline{\mathbb{B}}\)},\\\nonumber
\forall \left( \delta \ov{u}, \delta \ov{P}, \delta \ov{N} \right) \in \mathrm{T}_{(\ov{u}, \ov{P}, \ov{N})}\mathfrak{K},\hspace{-7em}&\\\nonumber
	0 &= \left (d \left( \ov{u^i_{,1}} - \ov{P^i_1} \right)
        - \left[ T_0 \right]_i\right ) \delta \ov{u^i}
        &\text{on \(\partial \overline{\mathbb{B}}\)},\\\nonumber
    0 &=
        \left (e \left( \ov{P^i_{j,1}} - \ov{N^i_{j1}} \right)
        + \frac{d\,\ell^4}{12} \left( \ov{P^i_{\alpha,1}} - \ov{N^i_{1\alpha}} \right) \delta^\alpha_j
        - \left[ T_1 \right]_i^j\right ) \delta \ov{P^i_j}
        &\text{on \(\partial \overline{\mathbb{B}}\)},\\\nonumber
    0 &=
        \left (c\,\ov{N^i_{j1,1}}
        - \left[ T_2 \right]_i^{j1} \right ) \delta \ov{N^i_{j1}}
        &\text{on \(\partial \overline{\mathbb{B}}\)},\\\nonumber
    0 &=
        \left (c\,\ov{N^i_{j\alpha,1}}
        + \frac{e\,\ell^4}{12}\, \ov{N^i_{j\alpha,1}}
        - \left[ T_2 \right]_i^{j\alpha} \right ) \delta \ov{N^i_{j\alpha}}
        &\text{on \(\partial \overline{\mathbb{B}}\)}.
\end{align}
\tagsleft@false
\makeatother

Once a choice of boundary condition is made, the bondary equations -- involving the universally quantified test functions \(\delta \ov{u}\), \(\delta \ov{P}\) and \(\delta \ov{N}\) -- can be further simplified. The equation obtained this way are the so-called Euler-Lagrange equations. Recall that in \cref{sec_1d_in_3d} we assumed that \(N\) was anchored on at least one boundary. This assumption was necessary for the energy of the semi-holonomic case to simplify into the energy we obtained in \cref{eq_energy_semi_holo_1d}, which is very similar to the one of Cosserat and Timoshenko beams. However, it can be disregarded if one is only concerned with the non-holonomic case. Similarly, the holonomic case needs \(P\) to be anchored on at least one boundary for the energy to simplify into the one obtained in \cref{eq_energy_holo_1d}, which is very similar to the energy of the Euler--Bernoulli beam.\\

\subsection*{Semi-holonomic Euler-Lagrange equations}

In the semi-holonomic case, one has \(N^i_{jk} = P^i_{j,k}\). From the reasoning of \cref{sec_1d_in_3d_semi_holo}, one remove all instances of \(N\) in the energy as long as it is fixed on at least one boundary. In particular, this means that the tangent space \(\mathrm{T}_{(\ov{u}, \ov{P})}\mathfrak{K}\) is isomorphic to \(\mathbb{R}^3 \times \mathbb{R}^{3\times3}\) with \(\ov{\delta N^i_{j1}} = \ov{\delta P^i_{k,1}}\) and \(\ov{\delta N^i_{j\alpha}} = 0\). \Cref{eq_weak_generic} then becomes (c.f. \cref{eq_energy_semi_holo_1d}):
\begin{align}
	\forall \left( \delta \ov{u}, \delta \ov{P} \right) \in \mathrm{T}_{(\ov{u}, \ov{P})}\mathfrak{K},\hspace{-7em}&\\\nonumber
	0 &= \int_{\overline{\mathbb{B}}}
		-d \left( \ov{u^i_{,11}} - \ov{P^i_{1,1}} \right) \delta \ov{u^i}
        - \left[ f_0 \right]_{i}\,\delta \ov{u^i}~\mathrm{d} X^1
    + \int_{\partial \overline{\mathbb{B}}}
        d \left( \ov{u^i_{,1}} - \ov{P^i_1} \right) \delta \ov{u^i}
        - \left[ T_0 \right]_i\,\delta \ov{u^i}
        ~\mathrm{d} X^1,\\\nonumber
	0 &= \int_{\overline{\mathbb{B}}}
		\frac a2 \left( \ov{P^i_j} + \ov{P^j_i} \right) \delta \ov{P^i_j}
        + b\,\ov{P^i_{j,1}}\,\delta \ov{P^i_{j,1}}
		- c\,\ov{P^i_{j,111}}\,\delta \ov{P^i_{j,1}}
		- d \left( \ov{u^i_{,1}} - \ov{P^i_1} \right) \delta \ov{P^i_{1}} \, \delta^1_i
        \\\nonumber
    &\qquad
        - \frac{d\,\ell^4}{12} \ov{P^i_{\alpha,11}}\delta \ov{P^i_{\alpha}}
        - \left[ f_2 \right]^{j1}_i\,\delta \ov{P^i_{j,1}}
        - \left[ f_1 \right]^{j}_i\,\delta \ov{P^i_j}
        ~\mathrm{d} X^1\\\nonumber
    &\quad
    + \int_{\partial \overline{\mathbb{B}}}
        c\,\ov{P^i_{j,11}}\,\delta \ov{P^i_{j,1}}
        + \frac{d\,\ell^4}{12} \ov{P^i_{\alpha,1}}\delta \ov{P^i_{\alpha}}
        - \left[ T_1 \right]_i^j\, \delta \ov{P^i_j}
        - \left[ T_2 \right]_i^{j1}\, \delta \ov{P^i_{j,1}}
        ~\mathrm{d} X^1\,.
\end{align}

Using another integration by part and then localising, this yields the following Euler-Lagrange equations:
\makeatletter
\tagsleft@true
\begin{align}
    \label{eq_euler_lagrange_semi_holo}
	\left[ f_0 \right]_i &=
		-d \left( \ov{u^i_{,11}} - \ov{P^i_{1,1}} \right)
        &\text{on \(\overline{\mathbb{B}}\)},\\\nonumber
    \left[ \widetilde{f}_1 \right]^{j}_i
    &=
        \frac a2 \left( \ov{P^i_j} + \ov{P^j_i} \right)
        - b\,\ov{P^i_{j,11}}
        + c\,\ov{P^i_{j,1111}}
        - d \left( \ov{u^i_{,1}} - \ov{P^i_1} \right) \delta^1_j
        - \frac{d\,\ell^4}{12}\, \ov{P^i_{\alpha,11}}\delta^\alpha_j
        &\text{on \(\overline{\mathbb{B}}\)},\\\nonumber
\forall \left( \delta \ov{u}, \delta \ov{P} \right) \in \mathrm{T}_{(\ov{u}, \ov{P})}\mathfrak{K},&\\\nonumber
        0 &= \left (d \left( \ov{u^i_{,1}} - \ov{P^i_1} \right)
            - \left[ T_0 \right]_i\right ) \delta u^i
            &\text{on \(\partial \overline{\mathbb{B}}\)},\\\nonumber
        0 &= \left (
            b\,\ov{P^i_{j,1}}
            + \frac{d\,\ell^4}{12}\, \ov{P^i_{\alpha,1}}\delta^\alpha_j
            - c\,\ov{P^i_{j,111}}
            - \left[ \widetilde{T}_1 \right]_i^j
            \right ) \delta P^i_j
            &\text{on \(\partial \overline{\mathbb{B}}\)},\\\nonumber
        0 &= \left (
            c\,\ov{P^i_{j,11}}
            - \left[ T_2 \right]_i^{j1}
            \right ) \delta \ov{P^i_{j,1}}
            &\text{on \(\partial \overline{\mathbb{B}}\)}\,,
\end{align}
\tagsleft@false
\makeatother
where we renamed the forces to absorb the redundant terms coming from the homogenisation. Notice that, in this case, the terms in \(\ell^4\) do not greatly impact the qualitative behaviour of the system. Here too, \(P\) has to be anchored on at least one boundary if one wants the energy of the holonomic case to simplify into the energy of \cref{eq_energy_holo_1d}, which is close to the one of Euler--Bernoulli beam.

\subsection*{Holonomic Euler-Lagrange equations}

In the holonomic case, one has \(N^i_{jk} = P^i_{j,k}\) and \(P^i_j = u^i_{,j}\). The reasoning of \cref{sec_1d_in_3d} allows to remove all terms in \(P\) in the energy, as long as it is fixed on at least one boundary. Using the same technique as above, the Euler-Lagrange equations then become:
\begin{align}
    \label{eq_euler_lagrange_holo}
    \left[ \widehat{f}_0 \right]_i &=
        a\,{\ov u^i}^{(2)}
        - b\,{\ov u^i}^{(4)}
        + c\,{\ov u^i}^{(6)}
        &\text{on \(\overline{\mathbb{B}}\)},\\\nonumber
    0 &= \left (
        a\,{\ov u^i}^{(1)}
        - b\,{\ov u^i}^{(3)}
        + c\,{\ov u^i}^{(5)}
        - \left[ \widehat{T}_0 \right]_i \right ) \delta {\ov u^i}
        &\text{on \(\partial \overline{\mathbb{B}}\)},\\\nonumber
    0 &= \left (
        b\,{\ov u^i}^{(2)}
        - c\,{\ov u^i}^{(4)}
        - \left[ \widetilde{T}_1 \right]_i^1 \right ) \delta {\ov u^i}^{(1)}
        &\text{on \(\partial \overline{\mathbb{B}}\)},\\\nonumber
    0 &= \left (
        c\,{\ov u^i}^{(3)}
        - \left[ \widetilde{T}_2 \right]_i^{11} \right ) \delta {\ov u^i}^{(2)}
        &\text{on \(\partial \overline{\mathbb{B}}\)},
\end{align}

where, once again, the forces were renaimed to absorb redundant terms. Such distinction between the forces not being important for the remainder of this article, they will be (slightly abusively) named the same.

\section{Study of the system of ordinary differential equations}
\label{sec_study_system}

The system of ODEs given in the above section is quite complex as it involves \(3 + 3^2 + 3^3 = 39\) scalar kinematic fields. However, several things can be said about it. First, the system is linear, of second order and with constant coefficients. This means that all solutions of the system can be obtained with a good precision by most numerial solvers. Then, not all 39 scalar fields interact with each other. On the contrary, the system can be decomposed into mutiple independent sub-systems. Finally, the role of some of the constitutive coefficients on the solutions can be explicited. We start with the second point.

\subsection{The purely macroscopically-induced beam}
\label{sec_purely_macroscopically_induced_cantilever_beam}

By construction, a completely free system has a trivial solution. Non-trivial solution arise from external solicitations of the system, through either non-zero boundary conditions or non-zero forces. From those a kinematic component can be excited in two ways: directly -- when its boundary condition or a force acting on it is modified -- or indirectly -- when another component is excited directly but couplings in the internal energy force the component to be non-trivial in order to minimise the energy.\\

An interesting object of study when looking at indirect excitation is the dependency graph. A graph where the nodes are the kinematic scalar components and their derivatives, and two nodes \(a\) and \(b\) share an edge when one is the derivative of the other or when the internal energy \(\Psi_{\mathrm{int}}\) satisfies \(\frac{\partial^2 \Psi_{\mathrm{int}}}{\partial a \partial b} \neq 0\). Importantly, the connected components of the graph can be seen as independent ODE systems. That is, a kinematic variable can be excited only by a direct excitation of a kinematic variable in the same connected component. In order to understand indirect excitations in the presented model, we therefore propose to look at the ODE system arising from (the connected components of) the macroscopic fields \(u^i\).

\subsubsection*{The problem of pure macroscopic traction}

The component of \(u^1\) represents the longitudinal elongation. The ODE system induced by its connected component therefore describes a problem of pure traction. A direct observation shows that this connected components is:
\begin{center}
\begin{tikzpicture}[node distance=2cm, auto]
    \node[draw, circle] (u1) {\(u^1\)};
    \node[draw, circle, right of=u1] (u11) {\(u^1_{,1}\)};
    \node[draw, circle, right of=u11] (P11) {\(P^1_1\)};
    \node[draw, circle, right of=P11] (P111) {\(P^1_{1,1}\)};
    \node[draw, circle, right of=P111] (N111) {\(N^1_{11}\)};

    \draw[-] (u1) -- (u11) node[midway, above] {\(\partial_1\)};
    \draw[-] (u11) -- (P11) node[midway, above] {\(d\)};
    \draw[-] (P11) -- (P111) node[midway, above] {\(\partial_1\)};
    \draw[-] (P111) -- (N111) node[midway, above] {\(e\)};
\end{tikzpicture}
\end{center}

where the edges \(\mathfrak{X}\) --- \(\mathfrak{Y}\) are labelled with \(-\frac 12\frac{\partial^2 \Psi_{\mathrm{int}}}{\partial \mathfrak{X} \partial \mathfrak{Y}}\). The associated Euler-Lagrange equations in the non-holonomic, semi-holonomic and holonomic cases are provided in \cref{app_pure_macroscopic_traction}. Regarding the material defects, this excitation generates no disclinations -- as those are driven by \(N^i_{j\alpha}\) -- and no dislocations -- as those are driven by \(N^i_{jk} - N^i_{kj}\).

\subsubsection*{The problem of pure macroscopic bending}

The component of \(u^2\) represents the transverse displacement. The ODE system induced by its connected component therefore describes a problem of pure (planar) bending. A direct observation shows that this connected components is:

\begin{center}
    \begin{tikzpicture}[node distance=2cm, auto]
        \node[draw, circle] (u2) {\(u^2\)};
        \node[draw, circle, right of=u2] (u21) {\(u^2_{,1}\)};
        \node[draw, circle, right of=u21] (P21) {\(P^2_1\)};
        \node[draw, circle, right of=P21] (P12) {\(P^1_2\)};
        \node[draw, circle, right of=P12] (P121) {\(P^1_{2,1}\)};
        \node[draw, circle, below of=P121] (N121) {\(N^1_{21}\)};
        \node[draw, circle, left of=N121] (N211) {\(N^2_{11}\)};
        \node[draw, circle, left of=N211] (P211) {\(P^2_{1,1}\)};
        \node[draw, circle, right of=P121] (N112) {\(N^1_{12}\)};
    
        \draw[-] (u2) -- (u21) node[midway, above] {\(\partial_1\)};
        \draw[-] (u21) -- (P21) node[midway, above] {\(d\)};
        \draw[-] (P21) -- (P12) node[midway, above] {\(a\)};
        \draw[-] (P12) -- (P121) node[midway, above] {\(\partial_1\)};
        \draw[-] (P121) -- (N121) node[midway, left] {\(e\)};
        \draw[-] (N211) -- (P211) node[midway, above] {\(d\)};
        \draw[-] (P211) -- (P21) node[midway, right] {\(\partial_1\)};
        \draw[-] (P121) -- (N112) node[midway, above] {\(d\,\frac{\ell^4}{12}\)};
    \end{tikzpicture}
    \end{center}

By symmetry, the connected component of \(u^3\) is the same with every \(2\) replaced by a \(3\). The associated Euler-Lagrange equations in the non-holonomic, semi-holonomic and holonomic cases are provided in \cref{app_pure_macroscopic_bending}. Regarding the materials defects, this excitation may generate some dislocations -- from \(N^1_{12} - N^1_{21}\) -- and some disclinations -- from \(N^1_{12}\). Notice that the latter is only excited thanks to the interaction provided by \(d\, \frac{\ell^4}{12}\), the factor of the curl-like term \(P^1_{2,1} - N^1_{12}\).

\subsection{Asymptotic behaviours and the role of the constitutive coefficients}

We now propose to investigate the role of some constitutive coefficients on the solutions of the system.

\subsubsection*{The non-holonomic penality}

The consitutive coefficient \(e\) is a factor, in the tridimensional energy, of the term \(\left\| N - \nabla P \right\|^2\). One can therefore see it as a penalty term, forcing \(N \simeq \nabla P\). One hence expects that, when \(e \to \infty\), \(\left\| N - \nabla P \right\| \to 0\). Indeed, using \cref{eq_euler_lagrange_non_holo}.3, one has:
        \begin{align}
            \ov{P^i_{j,1}} &= 
                \ov{N^i_{j1}}
                + \frac 1e \left (b\,\ov{N^i_{j1}}
                - c\,\ov{N^i_{j1,11}}
                - \left[ f_2 \right]_i^{j1} \right )\,.
        \end{align}
        Using this equality with \cref{eq_euler_lagrange_non_holo}.2, one obtains a system of ODE in \(\ov{u}\) and \(N\) which, upon combining the terms of higher orders and dividing the last equation by e, takes the form:
        \begin{align}
            d\, u^i_{,11} &= \textit{lower-order terms}\,,\\\nonumber
            \frac{d\,\ell^4}{12}\delta^\alpha_j\,\frac ce \, N^i_{\alpha1,111} + c\, N^i_{j1,111} &= \textit{lower-order terms}\,,\\\nonumber
            \left( \frac ce + \frac{\ell^4}{12} \right) N^i_{j\alpha,11} &= \textit{lower-order terms}\,.
        \end{align}
        Importantly, the higher-order terms do not vanish when \(e \to \infty\) nor do the lower order term explode. This means that this system can be expressed as:
        \begin{align}
            \begin{bmatrix}
                K \\ Z \\ W
            \end{bmatrix}^{(1)} &= \begin{bmatrix}
                0 & \mathrm{Id} & 0 \\
                0 & 0 & \mathrm{Id} \\
                \mathbf{A}_0 & \mathbf{A}_1 & \mathbf{A}_2
            \end{bmatrix} \cdot \begin{bmatrix}
                K \\ Z \\ W
            \end{bmatrix} + \mathbf{B}
        \end{align}
        where \(K\) is a vector containing the components of \(\ov{u}\) and \(N\), \(Z\) (resp. \(W\)) is a symbolic substitute for \(K^{(1)}\) (resp. \(K^{(2)}\)) and \(A_i\) and \(B\) are real matrices continuously parametrised by \(\frac 1e\). This system can therefore be seen as a system of first-order linear ODEs with constant coefficients valued in \(\mathcal{C}^0(\mathbb{R}_+^\star, \mathbb{R})\), the set of continuous real-valued fonctions of \(\frac 1e\). The latter is a Hilbert space. Existence and uniqueness of global solutions of the system is then guaranteed once the boundary conditions -- also continuously parametrised by \(\frac 1e\) -- are added. The solution will then be continuously parametrised by \(\frac 1e\). Crucially, it will be well-defined for \(\frac 1e = 0\) and will correspond to the limit of the solution for \(e \to \infty\). In particular, this means that there exists a real number \(C\) such that \(\lim_{e \to \infty} \left\| b\,\ov{N^i_{j1}} - c\,\ov{N}^i_{j1,11} - \left[ f_2 \right]_i^{j1} \right \|_{\mathrm{L}^\infty} \leq C\). Using this, one has:
        \begin{align}
            \lim_{e\to \infty}
                \left\| \ov{N^i_{j1}} - \ov{P^i_{j,1}} \right\|_{\mathrm{L}^\infty}
                &\leq \lim_{e\to \infty}
                    \frac Ce
                = 0\,.
        \end{align}

        This shows that \(\left\| N - \nabla P \right\|_{{\mathrm{L}}^\infty} \underset{e \to \infty} \longrightarrow 0\). In particular, the solutions of the non-holonomic problem converge to the solutions of the corresponding semi-holonomic problem:
        \begin{theo}
            \label{theo_lim_e}
            Let \(\ov{u}_{\mathrm{non-holo}}, \ov{P}_{\mathrm{non-holo}}, \ov{N}_{\mathrm{non-holo}}\) be the solution of the non-holonomic problem (c.f. \cref{eq_euler_lagrange_non_holo}) and \(\ov{u}_{\mathrm{semi-holo}}, \ov{P}_{\mathrm{semi-holo}}, \ov{N}_{\mathrm{semi-holo}}\) be the solution of the semi-holonomic problem (c.f. \cref{eq_euler_lagrange_semi_holo}). If, in both cases, the boundary conditions are indentical, fully specify \(N^i_{j\alpha}\) on (at least one point of) the boundary, and are consistent with \(\nabla P = N\), then one has:
            \begin{align}
                \lim_{e\to \infty}
                    \left\| \ov{u}_{\mathrm{non-holo}} - \ov{u}_{\mathrm{semi-holo}} \right\|_{\mathrm{L}^\infty}
                    &= 0\,,\\\nonumber
                \lim_{e\to \infty}
                    \left\| \ov{P}_{\mathrm{non-holo}} - \ov{P}_{\mathrm{semi-holo}} \right\|_{\mathrm{L}^\infty}
                    &= 0\,,\\\nonumber
                \lim_{e\to \infty}
                    \left\| \left[ \ov{N}_{\mathrm{non-holo}} \right]^i_{j1} - \left[ \ov{P}_{\mathrm{semi-holo}} \right]^i_{j,1} \right\|_{\mathrm{L}^\infty}
                    &= 0\,.
            \end{align}
        \end{theo}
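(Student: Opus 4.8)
The plan is to treat $\varepsilon := \frac1e$ as a bona fide parameter and to read the convergence off the continuous dependence of the first-order system derived just above the statement. First I would freeze notation: let $\mathbf{Y}_\varepsilon = (K_\varepsilon, Z_\varepsilon, W_\varepsilon)$ denote the solution of $\mathbf{Y}^{(1)} = \mathbf{A}(\varepsilon)\,\mathbf{Y} + \mathbf{B}(\varepsilon)$, where $K_\varepsilon$ stacks the components of $\ov{u}_{\mathrm{non-holo}}$ and $\ov{N}_{\mathrm{non-holo}}$, and $Z,W$ are the symbolic substitutes for $K^{(1)}, K^{(2)}$. The entries of $\mathbf{A}$ and $\mathbf{B}$, as well as the anchoring boundary data, are continuous in $\varepsilon$ near $0$; crucially, the coefficients carrying the top-order derivatives -- namely $\frac{c}{e} + \frac{\ell^4}{12}$ and $c$ -- stay bounded away from $0$ as $\varepsilon \to 0$ (using $c,\ell \neq 0$), so the limiting system at $\varepsilon = 0$ is again a non-degenerate first-order linear ODE, and the associated two-point boundary value problem remains well-posed.

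Next I would invoke continuous dependence of a well-posed linear boundary value problem on its parameter. Writing $\mathbf{Y}_\varepsilon = \Phi_\varepsilon\,\mathbf{c}(\varepsilon) + \mathbf{Y}^{\mathrm{p}}_\varepsilon$, with $\Phi_\varepsilon$ the fundamental matrix (continuous in $\varepsilon$) and $\mathbf{c}(\varepsilon)$ the unique vector enforcing the boundary conditions (the solution of a linear system whose matrix is continuous and invertible by well-posedness), one obtains that $\varepsilon \mapsto \mathbf{Y}_\varepsilon$ is continuous into $\mathcal{C}^0(\overline{\mathbb{B}})$. Since $\overline{\mathbb{B}} = \intcc 0L$ is compact, this is exactly uniform, hence $\mathrm{L}^\infty$, convergence $\mathbf{Y}_\varepsilon \to \mathbf{Y}_0$; reading off the two blocks of $K$ gives $\ov{u}_{\mathrm{non-holo}} \to \ov{u}_0$ and $\ov{N}_{\mathrm{non-holo}} \to \ov{N}_0$ in $\mathrm{L}^\infty$.

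The central step, which I expect to be the \emph{main obstacle}, is to identify $(\ov{u}_0,\ov{N}_0)$ with the semi-holonomic solution. Setting $\varepsilon = 0$ in the algebraic relation $\ov{P^i_{j,1}} = \ov{N^i_{j1}} + \varepsilon(\dots)$ forces the constraint $\nabla P = N$ to hold exactly; I would then verify, term by term, that substituting this constraint into the $\varepsilon = 0$ limit of \cref{eq_euler_lagrange_non_holo} reproduces the bulk equations of \cref{eq_euler_lagrange_semi_holo}. This is precisely where the hypotheses enter: requiring the boundary data to fully specify $N^i_{j\alpha}$ and to be consistent with $\nabla P = N$ is what makes the boundary conditions of the limit problem coincide with the semi-holonomic ones, so that uniqueness yields $(\ov{u}_0,\ov{N}_0) = (\ov{u}_{\mathrm{semi-holo}}, \nabla\ov{P}_{\mathrm{semi-holo}})$. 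The delicate, bookkeeping-heavy part is matching the bulk and boundary contributions so that the degenerate limit is genuinely the semi-holonomic boundary value problem and not a spurious neighbour.

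Finally I would push the convergence back onto $\ov{P}$. The third claim is then immediate: since $\ov{N}_0 = \nabla\ov{P}_{\mathrm{semi-holo}}$, the block convergence gives $[\ov{N}_{\mathrm{non-holo}}]^i_{j1} \to [\ov{P}_{\mathrm{semi-holo}}]^i_{j,1}$, which is the stated limit. For $\ov{P}$ itself I combine the already-established estimate $\|N - \nabla P\|_{\mathrm{L}^\infty} \to 0$ with $\ov{N}_{\mathrm{non-holo}} \to \nabla\ov{P}_{\mathrm{semi-holo}}$ to conclude that the non-holonomic field $\ov{P^i_{j,1}}$ converges uniformly to its semi-holonomic counterpart; then, because the identical anchoring conditions pin $\ov{P}$ to a common value at (at least) one boundary point $X_0$, integrating the difference from $X_0$ and using $\|\ov{P}_{\mathrm{non-holo}} - \ov{P}_{\mathrm{semi-holo}}\|_{\mathrm{L}^\infty} \le L\,\|\partial_1\ov{P}_{\mathrm{non-holo}} - \partial_1\ov{P}_{\mathrm{semi-holo}}\|_{\mathrm{L}^\infty}$ closes the argument.
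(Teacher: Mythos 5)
Your proposal is correct and shares its analytic backbone with the paper: both treat \(\varepsilon = \tfrac 1e\) as a parameter of the first-order linear system, invoke continuity of the parametrised boundary value problem to get uniform convergence on the compact interval \(\intcc 0L\), and reduce the theorem to identifying the \(\varepsilon = 0\) system with the semi-holonomic one. Where you genuinely diverge is that identification step. You propose a term-by-term substitution of the constraint \(N = \nabla \ov{P}\) into the \(\varepsilon = 0\) limit of \cref{eq_euler_lagrange_non_holo} and a comparison with \cref{eq_euler_lagrange_semi_holo}; the paper instead argues structurally from the weak forms: writing the non-holonomic weak form as \(0 = \int \mathrm{A}_1(\ov{u},\ov{P})\,\delta\ov{P} + e\left( N - \ov{P}^{(1)} \right)^{(1)}\delta\ov{P} + \mathrm{A}_2(\ov{P},N)\,\delta N + e\left( N - \ov{P}^{(1)} \right)\delta N\,\mathrm{d}X^1\), it localises, differentiates the \(\delta N\)-equation and subtracts it from the \(\delta\ov{P}\)-equation, so that the \(e\)-terms cancel \emph{identically in \(e\)}, leaving the \(e\)-independent relation \(0 = \mathrm{A}_1(\ov{u},\ov{P}) - \mathrm{A}_2(\ov{P},N)^{(1)}\), which under \(N \to \nabla\ov{P}\) is verbatim the localised semi-holonomic equation. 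The paper itself concedes your route works ("this can be checked manually"), but its cancellation argument buys insensitivity to bookkeeping errors and makes transparent \emph{why} the limit is the semi-holonomic problem rather than a "spurious neighbour" -- exactly the worry you flag as the main obstacle. Two smaller points: your explicit recovery of the \(\ov{P}\)-limit by integrating from an anchored boundary point is more careful than the paper, which leaves this implicit; but note that the theorem's hypotheses anchor \(N^i_{j\alpha}\), not \(\ov{P}\), so your Poincar\'e-type estimate needs either an extra anchoring assumption or, more robustly, the inclusion of the \(\ov{P}\)-components in the state vector \(\mathbf{Y}\) (legitimate, since \(\ov{P^i_{j,1}}\) is algebraically expressed through \(N\)), after which the boundary-value continuity argument delivers the \(\ov{P}\)-limit directly. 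Finally, both you and the paper assume rather than prove well-posedness (invertibility of the boundary matrix) uniformly near \(\varepsilon = 0\); you at least name the hypothesis, which is a slight improvement in rigour.
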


        \begin{proof}
            From the above reasonning, the solution of the non-holonomic problem converges to a well-defined solution of the limiting ODE system. This ODE system is exactly the one of the semi-holonomic problem. This can be checked manually but also comes from the process by which those where obtained. Indeed, the non-holonomic weak-form takes the following form:
            \begin{align}
                0 &= \int_{\overline{\mathbb{B}}}
                \mathrm{A}_1(\ov{u}, \ov{P}) \, \delta \ov{P} + e \left( N - \ov{P^{(1)}} \right)^{(1)} \, \delta \ov{P}
                + \mathrm{A}_2(\ov{P}, N) \, \delta N + e \left( N - \ov{P^{(1)}} \right) \, \delta N\, \mathrm{d} X^1\,
            \end{align}
            where \(\mathrm{A}_1\) and \(\mathrm{A}_2\) are linear in their arguments.\footnote{\(A_1\) and \(A_2\) are not equal to the earlier \(\mathbf{A}_1\) and \(\mathbf{A}_2\).} After localisation, by differentiating the second and substracting it from the first, one obtains:
            \begin{align}
                \label{eq_proof_theo_lim_e_1}
                0 &= \mathrm{A}_1(\ov{u}, \ov{P}) - \mathrm{A}_2(\ov{P}, N)^{(1)}\,.
            \end{align}
            Parallely, the semi-holonomic case weak formulation is:
            \begin{align}
                0 &= \int_{\overline{\mathbb{B}}}
                    \mathrm{A}_1(\ov{u}, \ov{P}) \, \delta \ov{P} + e \left( N - \ov{P}^{(1)} \right)^{(1)} \, \delta \ov{P}
                    + \mathrm{A}_2(\ov{P}, \ov{P}) \, \delta \ov{P}^{(1)} + e \left( N - \ov{P}^{(1)} \right) \, \delta \ov{P}^{(1)}\, \mathrm{d} X^1\,.
            \end{align}
            Integrating by part and localising yields:
            \begin{align}
                0 &= \mathrm{A}_1(\ov{u}, \ov{P}) - \mathrm{A}_2(\ov{P}, \ov{P})^{(1)}
            \end{align}
            which is exactly the limit of \cref{eq_proof_theo_lim_e_1} when \(N \to \ov{P}\).
        \end{proof}

\subsubsection*{The semi-holonomic penality}

The consitutive coefficient \(d\) is a factor, in the tridimensional energy, of the term \(\left\| P - \nabla u \right\|^2\). One can therefore see it as a penalty term, forcing \(P \simeq \nabla u\). One hence expects that, when \(d \to \infty\), \(\left\| P - \nabla u \right\| \to 0\). Indeed, \cref{eq_euler_lagrange_non_holo}.2 can be rewritten as:
    \begin{align}
        u^i_{,1} &= P^i_1 + \frac 1d \left ( \frac a2 \left( P^i_1 + P^1_i \right) - e \left( P^i_{1,11} - N^i_{11,1} \right) - \left[ f_1 \right]^{1}_i \right )\\\nonumber
        N^i_{1\alpha} &= P^i_{\alpha,1} - \frac {12}{d\, \ell^4} \left ( b\, N^i_{1\alpha} - c\, N^i_{1\alpha,11} - \left[ f_2 \right]_i^{j\alpha} \right ) + \frac 1d \left ( \frac a2 \left( N^i_{j\alpha} + N^j_{i\alpha} \right) - e\, N^i_{j\alpha,11} \right ) 
    \end{align}
    Substituting these into the other equations of \cref{eq_euler_lagrange_non_holo} yields a system of ODE whose parametrisation in \(\frac 1d\) is well-defined when \(\frac 1d = 0\). Using the exact same arguments as above, this means that \(\left \| \nabla u - P \right \|_{\mathrm{L}^\infty} \underset{d \to \infty}{\longrightarrow} 0\) and \(\left \| N^i_{1\alpha} - P^i_{\alpha,1} \right \|_{\mathrm{L}^\infty} \underset{d \to \infty}{\longrightarrow} 0\). In particular, this means that the solutions of the semi-holonomic problem converge to the solutions of the corresponding holonomic problem:
    \begin{theo}
        Let \(\ov{u}_{\mathrm{semi-holo}}, \ov{P}_{\mathrm{semi-holo}}\) be the solution of the semi-holonomic problem (c.f. \cref{eq_euler_lagrange_semi_holo}) and \(\ov{u}_{\mathrm{holo}}\) be the solution of the holonomic problem (c.f. \cref{eq_euler_lagrange_holo}). If, in both cases, the boundary conditions are indentical, fully specify \(P^i_{\alpha}\) on (at least one point of) the boundary, and are consistent with \(\nabla u = P\), then one has:
        \begin{align}
            \lim_{d\to \infty}
                \left\| \ov{u}_{\mathrm{semi-holo}} - \ov{u}_{\mathrm{holo}} \right\|_{\mathrm{L}^\infty}
                &= 0\,,\\\nonumber
            \lim_{d\to \infty}
                \left\| \left[ \ov{P}_{\mathrm{semi-holo}}  \right]^i_1- \left[ \ov{u}_{\mathrm{holo}} \right]^i_{,1} \right\|_{\mathrm{L}^\infty}
                &= 0\,.
        \end{align}
    \end{theo}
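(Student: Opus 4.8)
The plan is to reproduce, almost verbatim, the strategy used for \cref{theo_lim_e}, substituting the penalty coefficient $d$ for $e$ and the target constraint $P = \nabla u$ for $N = \nabla P$. First I would start from the semi-holonomic Euler--Lagrange system \cref{eq_euler_lagrange_semi_holo} and locate the unique place where $d$ enters at leading order, namely the penalty pair $d\,(\ov{u^i_{,11}} - \ov{P^i_{1,1}})$ in the $\ov u$-equation and $-d\,(\ov{u^i_{,1}} - \ov{P^i_1})$ in the $j=1$ component of the $\ov P$-equation. Solving the latter for the constraint violation gives $\ov{u^i_{,1}} - \ov{P^i_1} = \tfrac1d\big(\tfrac a2(\ov{P^i_1}+\ov{P^1_i}) - b\,\ov{P^i_{1,11}} + c\,\ov{P^i_{1,1111}} - [\widetilde f_1]^1_i\big)$, exactly as in the display preceding the statement, so the longitudinal defect $\ov{u^i_{,1}} - \ov{P^i_1}$ is $\tfrac1d$ times a quantity built only from $d$-independent data. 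The transverse components $\ov{P^i_\alpha}$ are, by hypothesis, anchored on at least one boundary point and thus treated as given, consistently with the reduction to \cref{eq_energy_holo_1d} performed in \cref{sec_1d_in_3d}.

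The second step is to recast the system, after this substitution, as a first-order linear ODE system with constant coefficients continuously parametrised by $\tfrac1d$, in the companion form $[K,Z,W,\dots]^{(1)} = \mathbf{A}(\tfrac1d)\,[K,Z,W,\dots] + \mathbf{B}(\tfrac1d)$ used in the proof of \cref{theo_lim_e}; here one needs additional auxiliary variables because the semi-holonomic equations reach fourth order in $\ov P$ and sixth order in the would-be $\ov u$-limit. The key regularity point, which I would verify by inspecting the rewritten equations, is that $\mathbf A$ and $\mathbf B$ are continuous at $\tfrac1d = 0$ and that the leading-order matrix stays invertible there, i.e. eliminating $\ov P$ in favour of $\nabla\ov u$ does not drop the differential order. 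Existence, uniqueness and continuous dependence on $\tfrac1d$ then follow exactly as before once the boundary data -- identical, anchoring $\ov{P^i_\alpha}$, and consistent with $\nabla u = P$ -- are adjoined, so the solution extends continuously to $\tfrac1d=0$, which is the $d\to\infty$ limit. Boundedness of the bracketed factor yields $\big\|\ov{u^i_{,1}} - \ov{P^i_1}\big\|_{\mathrm{L}^\infty} \leq C/d \to 0$.

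It remains to identify the limiting system at $\tfrac1d = 0$ with the holonomic system \cref{eq_euler_lagrange_holo}. Mirroring the weak-form argument in the proof of \cref{theo_lim_e}, I would write the semi-holonomic weak form schematically as $0 = \int_{\ov{\mathbb B}} \mathrm A_1(\ov u,\ov P)\,\delta\ov u + d\,(\ov P^i_1-\ov{u^i_{,1}})\,\delta\ov{u^i_{,1}} + \mathrm A_2(\ov u,\ov P)\,\delta\ov P - d\,(\ov P^i_1-\ov{u^i_{,1}})\,\delta\ov{P^i_1}\,\mathrm dX^1$, then eliminate the penalty pair by differentiating the localised $\ov u$-equation and adding it to the $j=1$ component of the $\ov P$-equation. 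Setting $\ov{P^i_1} = \ov{u^i_{,1}}$ in the surviving $a$-, $b$- and $c$-terms then reproduces the localised holonomic bulk equation $a\,\ov u^{i\,(2)} - b\,\ov u^{i\,(4)} + c\,\ov u^{i\,(6)}$ of \cref{eq_euler_lagrange_holo}; the anchoring of $\ov{P^i_\alpha}$ is precisely what makes the reduced holonomic energy the correct limit. The two displayed $\mathrm L^\infty$ convergences follow: the first from continuity of the full state vector (hence of $\ov u$) in $\tfrac1d$ at $\tfrac1d=0$, and the second from $\big\|\ov{P^i_1}_{\mathrm{semi}} - \ov{u^i_{,1}}_{\mathrm{holo}}\big\|_{\mathrm L^\infty} \leq \big\|\ov{P^i_1}_{\mathrm{semi}} - \ov{u^i_{,1}}_{\mathrm{semi}}\big\|_{\mathrm L^\infty} + \big\|\ov{u^i_{,1}}_{\mathrm{semi}} - \ov{u^i_{,1}}_{\mathrm{holo}}\big\|_{\mathrm L^\infty}$, where the first summand is the $C/d$ bound and the second vanishes because $\ov{u^i_{,1}}$ is itself a component of the converging state vector $K$.

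I expect the main obstacle to be the non-degeneracy check at $\tfrac1d=0$ rather than the convergence machinery: unlike the $e$-penalty, the $d$-penalty couples $\ov u$ and $\ov P$ already at first order, so one must confirm that collapsing $\ov{P^i_1}$ onto $\ov{u^i_{,1}}$ inherits the fourth-order $b$-term and sixth-order $c$-term cleanly by $\ov u$, matching \cref{eq_euler_lagrange_holo}, without producing a singular companion matrix. Verifying that the boundary data remain consistent in the limit -- so that the limit selects the holonomic solution, and not merely some solution of the holonomic bulk equation with spurious boundary behaviour -- is the other point requiring care.
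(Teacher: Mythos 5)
Your proposal is correct and takes essentially the same approach as the paper: the paper's own proof is a one-line remark that the argument of \cref{theo_lim_e} carries over verbatim, \ie{} solve the penalty equation for the constraint violation to obtain an $\mathrm{L}^\infty$ bound of order $\tfrac 1d$, recast the equations as a first-order linear ODE system continuously parametrised by $\tfrac 1d$ so the solution extends to $\tfrac 1d = 0$, and identify the limiting system with \cref{eq_euler_lagrange_holo} via the differentiate-and-combine weak-form manipulation. The only cosmetic difference is that the paper's preparatory computation substitutes into the non-holonomic system \cref{eq_euler_lagrange_non_holo}, whereas you work directly from the semi-holonomic system \cref{eq_euler_lagrange_semi_holo}, which is, if anything, more directly aligned with the theorem as stated.
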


    The proof is the same as for \cref{theo_lim_e} and relies on the fact that the limiting ODE system of the nono-holonomic problem when \(d \to \infty\) is exactly the one of the holonomic problem.

\section{Conclusion}

In this paper, we have presented a family of beam models derived from a three-dimensional higher-order elasticity framework. By introducing three kinematic fields -- the macroscopic displacement \(u\), the micro-distortion tensor \(P\), and the third-order tensor \(N\) -- we have systematically explored three regimes: holonomic, semi-holonomic, and non-holonomic. Each regime corresponds to different levels of kinematic constraints, ranging from classical elasticity to fully relaxed micromorphic models.

We have shown that the holonomic case reduces to a higher-order Euler--Bernoulli beam model, while the semi-holonomic case generalises the Timoshenko beam model. The non-holonomic case, which allows for the most general kinematics, provides a unified framework that includes both dislocations and disclinations as natural outcomes of the model.

Furthermore, we have demonstrated that the holonomic and semi-holonomic models can be recovered as singular limits of the non-holonomic model by increasing the penalty coefficients \(d\) and \(e\), respectively. This provides a clear hierarchical structure among the models and shows that the proposition can be seen as one continuous framework -- parametrised by the amount of relaxation -- rather than three different models.

The simplified ODE systems derived for specific cases, such as pure traction and bending, illustrate the practical applicability of the models and shows how, even in the simple case of bending, both types of defects can arise.

Future work will focus on better understanding the behaviour of the framework in some simple excitation regimes, investigate the physical relevence of some constraints (\eg{} transversal rigidity) in the hope of simplifying the model further, look for analytical solutions in some of those case and analyse the behaviours of defects in different regimes.

\subsection{Acknowledgments}

Mewen Crespo would like to thank the Faculty of Mathematics of the University of Duisburg-Essen for its hospitality during the semester-long on-site collaboration with Patrizio Neff, of which the present work is the fruit. Furthermore, the authors would like to thank the CNRS' research group GDR GDM CNRS for stimulating the interactions between differential geometry and mechanics. An interaction which is at the heart of this work. They would also like to thank CNRS, Univ Rennes, ANR-11-LABX-0020-0 program Henri Lebesgue Center for their financial support.

\section*{Appendixes}
\appendix
\section{Euler-Lagrange equations -- pure macroscopic traction}
\label{app_pure_macroscopic_traction}

\subsubsection*{non-holonomic pure macroscopic traction problem}

\begin{align}
	\left[ f_0 \right]_1 &=
		-d \left( u^1_{,11} - P^1_{1,1} \right)
        &\text{on \(\overline{\mathbb{B}}\)},\\\nonumber
    \left[ f_1 \right]_1^1 &= 
		a\, P^1_1
		- d \left( u^1_{,1} - P^1_1 \right)
		- e \left( P^1_{1,11} - N^1_{11,1} \right)
        &\text{on \(\overline{\mathbb{B}}\)},\\\nonumber
    \left[ f_2 \right]_1^{11} &= 
        b\,N^1_{11}
		- c\,N^1_{11,11}
		- e \left( P^1_{1,1} - N^1_{11} \right)
        &\text{on \(\overline{\mathbb{B}}\)},\\\nonumber
\forall \left( \delta \ov {u^1}, \delta \ov {P^1_1}, \delta \ov {N^1_{11}} \right) \in \mathrm{T}_{(\ov {u^1}, \ov {P^1_1}, \ov {N^1_{11}})}\mathfrak{K},\hspace{-5em}&\\\nonumber
	0 &= \left (d \left( u^1_{,1} - P^1_1 \right)
        - \left[ T_0 \right]_1\right ) \delta u^1
        &\text{on \(\partial \overline{\mathbb{B}}\)},\\\nonumber
    0 &=
        \left (e \left( P^1_{1,1} - N^1_{11} \right)
        - \left[ T_1 \right]_1^1\right ) \delta P^1_1
        &\text{on \(\partial \overline{\mathbb{B}}\)},\\\nonumber
    0 &=
        \left (c\,N^1_{11,1}
        - \left[ T_2 \right]_i^{j1} \right ) \delta N^1_{11}
        &\text{on \(\partial \overline{\mathbb{B}}\)}.
\end{align}

\subsubsection*{semi-holonomic pure macroscopic traction problem}

\begin{align}
	\left[ f_0 \right]_1 &=
		-d \left( u^1_{,11} - P^1_{1,1} \right)
        &\text{on \(\overline{\mathbb{B}}\)},\\\nonumber
    \left[ \widetilde{f}_1 \right]^{j}_i
    &=
        a\, \ov{P^1_1}
        - b\,\ov{P^1_{1,11}}
        + c\,\ov{P^1_{1,1111}}
        - d \left( \ov{u^1_{,1}} - \ov{P^1_1} \right)
        &\text{on \(\overline{\mathbb{B}}\)},\\\nonumber
\forall \left( \delta \ov {u^1}, \delta \ov {P^1_1} \right) \in \mathrm{T}_{(\ov {u^1}, \ov {P^1_1})}\mathfrak{K},&\\\nonumber
        0 &= \left (d \left( u^1_{,1} - P^1_1 \right)
            - \left[ T_0 \right]_1\right ) \delta u^1
            &\text{on \(\partial \overline{\mathbb{B}}\)},\\\nonumber
        0 &= \left (
            b\,\ov{P^1_{1,1}}
            - c\,\ov{P^1_{1,111}}
            - \left[ \widetilde{T}_1 \right]_1^1
            \right ) \delta P^1_1
            &\text{on \(\partial \overline{\mathbb{B}}\)},\\\nonumber
        0 &= \left (
            c\,\ov{P^1_{1,11}}
            - \left[ T_2 \right]_1^{11}
            \right ) \delta \ov{P^1_{1,1}}
            &\text{on \(\partial \overline{\mathbb{B}}\)}\,.
\end{align}

\subsubsection*{holonomic pure macroscopic traction problem}

\begin{align}
    \label{pure_macroscopic_traction_holonomic}
    \left[ \widetilde{f}_0 \right]_1 &=
        a\,{u^1}^{(2)}
        - b\,{u^1}^{(4)}
        + c\,{u^1}^{(6)}
        &\text{on \(\overline{\mathbb{B}}\)},\\\nonumber
    0 &= \left (
        a\,{u^1}^{(1)}
        - b\,{u^1}^{(3)}
        + c\,{u^1}^{(5)}
        - \left[ \widetilde{T}_0 \right]_1 \right ) \delta u^1
        &\text{on \(\partial \overline{\mathbb{B}}\)},\\\nonumber
    0 &= \left (
        b\,{u^1}^{(2)}
        - c\,{u^1}^{(4)}
        - \left[ \widetilde{T}_1 \right]_1^1 \right ) \delta {u^1}^{(1)}
        &\text{on \(\partial \overline{\mathbb{B}}\)},\\\nonumber
    0 &= \left (
        c\,{u^1}^{(3)}
        - \left[ \widetilde{T}_2 \right]_1^{11} \right ) \delta {u^1}^{(2)}
        &\text{on \(\partial \overline{\mathbb{B}}\)}\,.
\end{align}

\section{Euler-Lagrange equations -- pure macroscopic bending}
\label{app_pure_macroscopic_bending}

\subsubsection*{non-holonomic pure macroscopic bending problem}

\begin{align}
	\left[ f_0 \right]_2 &=
		-d \left( u^2_{,11} - P^2_{1,1} \right)
        &\text{on \(\overline{\mathbb{B}}\)},\\\nonumber
    \left[ f_1 \right]_2^1 &= 
		\frac a2 \left( P^1_2 + P^2_1 \right)
		- d \left( u^2_{,1} - P^2_1 \right)
		- e \left( P^2_{1,11} - N^2_{11,1} \right)
        &\text{on \(\overline{\mathbb{B}}\)},\\\nonumber
    \left[ f_1 \right]_1^2 &= 
        \frac a2 \left( P^1_2 + P^2_1 \right)
        - \frac{d\, \ell^4}{12} \left( P^1_{2,11} - N^1_{12,1} \right)
        - e \left( P^1_{2,11} - N^1_{21,1} \right)
        &\text{on \(\overline{\mathbb{B}}\)},\\\nonumber
    \left[ f_2 \right]_2^{11} &= 
        b\,N^2_{11}
        - c\,N^2_{11,11}
        - e \left( P^2_{1,1} - N^2_{11} \right)
        &\text{on \(\overline{\mathbb{B}}\)},\\\nonumber
    \left[ f_2 \right]_1^{21} &= 
        b\,N^1_{21}
        - c\,N^1_{21,11}
        - e \left( P^1_{2,1} - N^1_{21} \right)
        &\text{on \(\overline{\mathbb{B}}\)},\\\nonumber
    \left[ f_2 \right]_1^{12} &= 
        b\,N^1_{12}
        - c\,N^1_{12,11}
        + \frac{\ell^4}{12}\left(
        \frac {a}{2} \left( N^1_{12} + N^1_{12} \right)
        - d\left( P^1_{2,1} - N^1_{12} \right)
        - e\, N^i_{j2,11}\right)
        &\text{on \(\overline{\mathbb{B}}\)},\\\nonumber
\forall \left( \delta \ov{u}, \delta \ov{P}, \delta \ov{N} \right) \in \mathrm{T}_{(\ov{u}, \ov{P}, \ov{N})}\mathfrak{K},\hspace{-5em}&\\\nonumber
	0 &= \left (d \left( u^2_{,1} - P^2_1 \right)
        - \left[ T_0 \right]_2\right ) \delta u^2
        &\text{on \(\partial \overline{\mathbb{B}}\)},\\\nonumber
    0 &=
        \left (e \left( P^2_{1,1} - N^2_{11} \right)
        - \left[ T_1 \right]_2^1\right ) \delta P^2_1
        &\text{on \(\partial \overline{\mathbb{B}}\)},\\\nonumber
    0 &=
        \left (e \left( P^1_{2,1} - N^1_{21} \right)
        + \frac{d\, \ell^4}{12} \left( P^1_{2,1} - N^1_{12} \right)
        - \left[ T_1 \right]_2^1\right ) \delta P^2_1
        &\text{on \(\partial \overline{\mathbb{B}}\)},\\\nonumber
    0 &=
        \left (c\,N^2_{11,1}
        - \left[ T_2 \right]_2^{11} \right ) \delta N^2_{11}
        &\text{on \(\partial \overline{\mathbb{B}}\)},\\\nonumber
    0 &=
        \left (c\,N^1_{21,1}
        - \left[ T_2 \right]_1^{21} \right ) \delta N^1_{21}
        &\text{on \(\partial \overline{\mathbb{B}}\)},\\\nonumber
    0 &=
        \left (c\,N^1_{12,1}
        + \frac{e\,\ell^4}{12}\, N^1_{12,1}
        - \left[ T_2 \right]_1^{12} \right ) \delta N^1_{12}
        &\text{on \(\partial \overline{\mathbb{B}}\)}.
\end{align}

\subsubsection*{semi-holonomic pure macroscopic bending problem}

\begin{align}
	\left[ f_0 \right]_2 &=
		-d \left( u^2_{,11} - P^2_{1,1} \right)
        &\text{on \(\overline{\mathbb{B}}\)},\\\nonumber
    \left[ \widetilde{f}_1 \right]^{1}_2
    &=
        \frac a2 \left( \ov{P^2_1} + \ov{P^1_2} \right)
        - b\,\ov{P^2_{1,11}}
        + c\,\ov{P^2_{1,1111}}
        - d \left( \ov{u^2_{,1}} - \ov{P^2_1} \right)
        &\text{on \(\overline{\mathbb{B}}\)},\\\nonumber
    \left[ \widetilde{f}_1 \right]^{2}_1
    &=
        \frac a2 \left( \ov{P^1_2} + \ov{P^2_1} \right)
        - b\,\ov{P^1_{2,11}}
        + c\,\ov{P^1_{2,1111}}
        - \frac{d\,\ell^4}{12} \ov{P^1_{2,11}}
        &\text{on \(\overline{\mathbb{B}}\)},\\\nonumber
\forall \left( \delta \ov{u}, \delta \ov{P} \right) \in \mathrm{T}_{(\ov{u}, \ov{P})}\mathfrak{K},&\\\nonumber
        0 &= \left (d \left( u^2_{,1} - P^2_1 \right)
            - \left[ T_0 \right]_2\right ) \delta u^2
            &\text{on \(\partial \overline{\mathbb{B}}\)},\\\nonumber
        0 &= \left (
            b\,\ov{P^2_{j,1}}
            - c\,\ov{P^2_{1,111}}
            - \left[ \widetilde{T}_1 \right]_2^1
            \right ) \delta P^2_1
            &\text{on \(\partial \overline{\mathbb{B}}\)},\\\nonumber
        0 &= \left (
            b\,\ov{P^1_{2,1}}
            + \frac{d\,\ell^4}{12} \ov{P^1_{2,1}}
            - c\,\ov{P^1_{2,111}}
            - \left[ \widetilde{T}_1 \right]_1^2
            \right ) \delta P^1_2
            &\text{on \(\partial \overline{\mathbb{B}}\)},\\\nonumber
        0 &= \left (
            c\,\ov{P^2_{1,11}}
            - \left[ T_2 \right]_2^{11}
            \right ) \delta \ov{P^2_{1,1}}
            &\text{on \(\partial \overline{\mathbb{B}}\)}\\\nonumber
        0 &= \left (
            c\,\ov{P^1_{2,11}}
            - \left[ T_2 \right]_1^{21}
            \right ) \delta \ov{P^1_{2,1}}
            &\text{on \(\partial \overline{\mathbb{B}}\)}\,.
\end{align}

\subsubsection*{holonomic pure macroscopic bending problem}

These are the same equations as the ones for the holonomic pure macroscopic traction problem (c.f. \cref{pure_macroscopic_traction_holonomic}), with \(u^2\) instead of \(u^1\).

\printbibliography
\end{document}